\newtheorem{lemma}{Lemma}[section]
\newtheorem{proposition}[lemma]{Proposition}
\newtheorem{theorem}[lemma]{Theorem}
\newtheorem{corollary}[lemma]{Corollary}
\newtheorem{definition}{Definition}[section]
\newtheorem{remark}[lemma]{Remark}
\newcommand {\R} {\mathbb{R}} 
 \newcommand {\N} {\mathbb{N}}
\newcommand{\CC}{\mathcal{C}}
\newcommand{\HH}{\mathcal{H}}
\newcommand{\la}{\langle}
\newcommand{\ra}{\rangle}
\newcommand{\diff}{\hspace{0.3em}\mathrm{d}}
\newcommand {\p} {\partial}
\newcommand{\rr}{\bar{r}}
\newcommand {\va} {\varphi}
\newcommand{\ep}{\varepsilon}
\newcommand{\KK}{\tilde K}
\title{Lipschitz stability estimate for the simultaneous recovery of two coefficients in the anisotropic Schr\"odinger type equation via local Cauchy data}
\author{Sonia Foschiatti
\footnote{Dipartimento di Matematica e Geoscienze, via Valerio 12/1, Universit\`{a} di Trieste, Trieste, 34126, Italy. e-mail:sonia.foschiatti@phd.units.it}}
\date{ }
\begin{document}

\maketitle

\begin{abstract}
    We consider the inverse problem of the simultaneous identification of the coefficients $\sigma$ and $q$ of the equation div$(\sigma\nabla u) + qu=0$ from the knowledge of the complete Cauchy data pairs. We assume that $\sigma=\gamma A$ where $A$ is a given matrix function and $\gamma, q$ are unknown piecewise affine scalar functions. No sign, nor spectrum condition on $q$  is assumed. We derive a result of global Lipschitz stability in dimension $n\geq 3$. The proof relies on the method of singular solutions and on the quantitative estimates of unique continuation.
\end{abstract}

\textbf{Keywords:} 
    Lipschitz stability, inverse problem, anisotropic media, Cauchy data.
\\

\textbf{Mathematical Subject Classification (2020):} Primary 35R30, Secondary: 35R25, 35Q60, 35J47.
%%
%% Section 1
%%

\section{Introduction}

In this paper we deal with the inverse problem of the \emph{coefficient identification} in a Schr\"odinger type equation. Let $\Omega$ be a bounded domain of $\R^n$ and $\Sigma$ be a non-empty portion of the boundary $\p\Omega$. Let us denote by $u\in H^1(\Omega)$ a weak solution to the equation

 \begin{equation}\label{eqn: main bvp}
         \,\mbox{div}(\sigma \nabla u) + qu = 0, \qquad \mbox{in }\Omega.
\end{equation}
We denote with $u|_{\Sigma}$ the trace of the solution $u$ to \eqref{eqn: main bvp} at the open portion $\Sigma$ and $\sigma\nabla u\cdot \nu|_{\Sigma}$ the trace of the conormal derivative of $u$ at $\Sigma$. Here $\nu$ is the outward unit normal of $\p\Omega$, which is well-defined under appropriate assumptions of regularity at the boundary.
The inverse problem consists in the simultaneous determination of the pair of coefficients $\sigma$ and $q$ from the knowledge of all the possible pairs of Cauchy data $(u|_{\Sigma}, \sigma \nabla u\cdot \nu|_{\Sigma})$ on the open portion $\Sigma$ of the boundary.

The boundary value problem associated to \eqref{eqn: main bvp} comprises a large class of inverse problems that are characterised by their ill-posed nature. Let us briefly review a selected collection that has motivated our interest in the stability issue for \eqref{eqn: main bvp}. The inverse problem of recovering only the coefficient $\sigma$ when $q=0$ from the knowledge of the Dirichlet to Neumann map is known as the Calder\'on problem, which was first introduced by A. Calder\'on in \cite{Calderon1980}. The uniqueness issue has been treated by Sylvester and Uhlmann in \cite{Uhlmann1987} for conductivities of class $C^2$ (see \cite{Uhlmann2014} for a complete survey). The stability issue was first investigated by Alessandrini in \cite{Alessandrini1988} for isotropic conductivities belonging to $H^s(\Omega)$ for $s>\frac{n}{2}+2$. Under these assumptions, the author proved a stability estimate with logarithmic modulus of continuity. Later Mandache \cite{Mandache2001} demonstrated that this estimate is indeed optimal under very general hypothesis. The ill-posed character in the inverse conductivity problem is a common denominator in this field and it constitutes an obstruction in numerical reconstructions. To reduce the ill-posed nature, it is convenient to restrict the space of admissible conductivities by imposing appropriate \emph{a-priori} assumptions on the conductivity. In \cite{Alessandrini2005}, Alessandrini and Vessella demonstrated a Lipschitz stability estimate for piecewise constant conductivities defined on a finite partition of the domain $\Omega$ that satisfy some \emph{a-priori} bounds. Rondi \cite{Rondi2003} has proved that the Lipschitz constant appearing in the stability estimate \cite[Theorem 2.7]{Alessandrini2005} behaves exponentially with respect to the number $N$ of subdomains of the partition. This result was subsequently extended by Di Cristo and Rondi \cite{Dicristo2003} for the inverse scattering problem and by Sincich \cite{Sincich2007} for the corrosion detection problem. Recently, in \cite{Alberti2023}, Alberti et al. have extended these ideas by proving that for coefficients belonging to finite dimensional manifolds, uniqueness and stability are guaranteed. In this direction, Lipschitz stability estimates have been proved for real and complex finite dimensional isotropic coefficients (\cite{Alessandrini2005, Alessandrini2017, Beretta2011}), for a special type of anisotropic conductivities (\cite{Gaburro2015, Foschiatti2021}), for polyhedral inclusions in a conductive medium (\cite{Beretta2021, aspri2022, Beretta2022}), for the non local operator (\cite{Ruland2019}) and for the elasticity case (\cite{Eberle2021}). As a disclaimer, we would like to remark that this list is far to be a complete collection of stability results that have been proven in the last decades. However, we would like to underline the fact that these results are based on the singular solution method and unique continuation techniques.

When $\sigma$ is the identity matrix, equation \eqref{eqn: main bvp} is the Schr\"odinger equation. Lipschitz stability has been demonstrated both when the Dirichlet to Neumann map is defined (hence under suitable spectral conditions) and when only Cauchy data are available, in the case of finite dimensional potential $q$ (see \cite{Beretta2013, Alessandrini2018, Ruland2022}). When $q$ has positive sign, \eqref{eqn: main bvp} is the reduced wave equation or the Helmholtz equation. In \cite{Beretta2016} the authors succeeded in proving the conditional Lipschitz stability at selected frequencies, using the Dirichlet to Neumann map. See also \cite{Alessandrini2019} for the related numerical experiments.  

When $q$ is a non-positive scalar function, the boundary value problem associated to \eqref{eqn: main bvp} models the propagation of light in a body and corresponds to the diffusion approximation of the radiative transfer equation in the frequency domain. In this framework, the coefficients $\sigma$ and $q$ model the diffusive and absorption coefficients, respectively.  The corresponding application is the diffusive optical tomography (DOT), a novel, non-invasive technique that allows one to map the optical properties of a tissue (see \cite{Arridge1999, Arridge2009, Applegate2020}). In \cite{Arridge1998}, Arridge and Lionheart demonstrated that, under generic assumptions, it is not possible to simultaneously recover diffusion and absorption coefficients. However, later results showed that if the coefficients belong to a finite dimensional space of bounded functions, it is possible to determine the coefficients simultaneously. In \cite{Harrach2009}, Harrach proved uniqueness under the assumption that the diffusion coefficient is piecewise constant and the absorption coefficient is piecewise analytic. The author used the technique of localised potentials, developed by the same author in \cite{Gebauer2008}, and monotonicity method also used in \cite{Harrach2012}. Recently, the method of localised potentials was successfully employed by Harrach and Lin (\cite{harrach2023}) to recover piecewise analytic coefficients in a semilinear elliptic equation, under proper hypotheses that ensure the existence of the Dirichlet to Neumann map. %Finally, we would like to mention another application, the Photoacoustic Tomography, an imaging technique that combines the high contrast of optical tomography with the high resolution of acoustic waves. In \cite{Bal2010}, Bal and Uhlmann proved a stability estimate for the  H\"older stability estimate for the Photoacoustic Tomography. In a recent paper (see \cite{Alessandrini2017b}), Alessadrini et al. determine simultaneously the absorption and diffusion coefficients from the measurements of the energy distribution.

The aim of this work is to demonstrate a Lipschitz stability estimate that holds simultaneously for both the coefficients. Lipschitz stability is derived by using a constructive approach based on the singular solution method and the quantitative estimates of unique continuation (see \cite{Alessandrini2005, Bellassoued2007, Alessandrini2018} and \cite{vessella2023} for a recent survey). We consider a partition $\{D_i\}_{i=1}^N$, $N\in \N$ of the domain $\Omega$ consisting in a finite number of bounded domains with boundary of class $C^2$. Notice that in previous works the boundary regularity was at most Lipschitz. In our context we need to impose a higher boundary regularity because we require that the Cauchy problem \eqref{eqn: green system} is well-posed and that a suitable version of the inequality of the three spheres proved in \cite{Carstea2020} can be applied.

We consider finite-dimensional coefficients of the form
\[
\sigma(x) := \gamma(x) A(x) = \left(\sum_{j=1}^N \gamma_{j}(x) \chi_{D_j}(x) \right)A(x), \qquad q(x) := \sum_{j=1}^N q_{j}(x) \chi_{D_j}(x),
\]
where $\gamma_j, q_j$ are piecewise affine functions for $j=1,\dots,N$, and $A(x)$ is a known $C^{1,1}(\Omega,Sym_n)$ matrix function, with $Sym_n$ the space of $n\times n$ real symmetric matrices. For simplicity, we denote with $\CC_i$ the local Cauchy data set associated to the pairs of coefficients $\{\sigma^{(i)}, q^{(i)}\}_{i=1,2}$ and $d(\CC_1,\CC_2)$ denotes the distance between the sets (see equation \eqref{eqn: distance}). In Theorem \ref{stability} we prove that
\begin{equation}\label{eqn: stability}
    \|\sigma^{(1)}-\sigma^{(2)}\|_{L^{\infty}(\Omega)}+\|q^{(1)}-q^{(2)}\|_{L^{\infty}(\Omega)} \leq C d(\CC_1, \CC_2),
\end{equation}
with $C>0$ a constant that depends only on the a priori data.

Our stability result is based on the method of singular solutions, whose application in the study of the stability in inverse problems dates back to \cite{Isakov1988, Alessandrini1988}. 

Recall that the boundary value problem problem associated to \eqref{eqn: main bvp} may be in the eigenvalue regime, so no unique solution is guaranteed. In \cite[Lemma 4.1]{Foschiatti2023} the authors constructed Green's functions for a boundary value problem with prescribed complex-valued Robin data on a portion $\Sigma_0$ of the boundary (see also \cite{Alessandrini2018}).

As in \cite{Alessandrini1990, Alessandrini2017, Alessandrini2018}, we start the analysis by providing a boundary stability estimate of H\"older type for both the coefficients $\gamma$ and $q$ of the form
\begin{equation}\label{eqn: boundary}
    \|\sigma^{(1)}-\sigma^{(2)}\|_{L^{\infty}(\Sigma)} + \|q^{(1)}-q^{(2)}\|_{L^{\infty}(\Sigma)} \leq C(d(\CC_1,\CC_2) + E)^{1-\eta} d(\CC_1,\CC_2)^{\eta},
\end{equation}
for $0<\eta<1$, $E=\max\{\|\sigma^{(1)}-\sigma^{(2)}\|_{L^{\infty}(\Omega)}, \|q^{(1)}-q^{(2)}\|_{L^{\infty}(\Omega)} \}$, and $C>0$ is a positive constant depending on the a-priori data only. Estimate \eqref{eqn: boundary} is derived by applying an Alessandrini's type argument, and the study of the blowup rate of the Green's function near the discontinuity interface. 

Despite the H\"older boundary estimate, combining the a-priori assumptions on $\gamma$ and $q$ yields a Lipschitz estimate that holds in the interior of the domain, an estimate that is derived by applying an iterative procedure that we now describe.

We fix  a chain of subdomains ${D_0, D_1, \dots, D_K}$ of the partition of $\Omega$ that, up to a reordering of indices, are contiguous. The chain connects $D_0$ to the domain $D_K$ where $E$ is achieved. We adopt and generalise the iterative strategy introduced in \cite{Alessandrini2005} for the determination of one parameter as follows. First, we determine the following H\"older type estimate for the two coefficients in $D_1$ in terms of the Cauchy data:
\[
\|\gamma^{(1)} -\gamma^{(2)}\|_{L^{\infty}(D_1)} + \|q^{(1)} - q^{(2)}\|_{L^{\infty}(D_1)} \leq C (E+\ep) \Big(\frac{\ep}{E + \ep} \Big)^{\tilde \eta_1},
\]
where $0<\tilde\eta_1 <1$ depends on the a-priori data only. Then, we use a two step procedure. 
\begin{itemize}
    \item[i)] First, we determine an upper bound for $\|\gamma^{(1)}_2-\gamma^{(2)}_2\|_{L^{\infty}(D_{2})}$ using the asymptotic estimates of the singular solutions near the discontinuous interface and the a-priori information on the $q^{(i)}$, for $i=1,2$. We also make use of quantitative estimates of unique continuation (see Proposition \ref{proposizione unique continuation finale}), based on propagation of smallness estimates demonstrated by Carstea and Wang in \cite{Carstea2020} that hold for piecewise Lipschitz coefficients. 
    \item[ii)] Second, we estimate $\|q^{(1)}_2-q^{(2)}_2\|_{L^{\infty}(D_{2})}$ by taking advantage of the stability estimate in $i)$, the asymptotic estimates for the Green functions and the quantitative estimates of unique continuation. 
\end{itemize}

Proceeding iteratively along the chain of subdomains up to $D_K$, we derive the following inequality:
\begin{equation}\label{eqn: omegak}
\|\gamma^{(1)} -\gamma^{(2)}\|_{L^{\infty}(D_K)} + \|q^{(1)} -q^{(2)} \|_{L^{\infty}(D_K)} \leq C (E+\ep) \omega^{(3(K-1))}_{\tilde \eta_K}\Big(\frac{\ep}{E + \ep} \Big),
\end{equation}
with $0<\tilde \eta_K<1$ a constant that depends on the a-priori data only, and $\omega_{\tilde \eta_K}$ is a modulus of continuity of logarithmic type of the form
\[
\omega_{\tilde \eta_K}(t) \leq c |\ln t^{-1}|^{-\tilde\eta_K}, \quad \text{for }t\in(0,1).
\]
The Lipschitz stability estimate \eqref{eqn: stability} is deduced by \eqref{eqn: omegak} and the fact that $\omega_{\tilde\eta_K}$ is invertible.

The article is organized as follows. In Section \ref{sec: 2} we introduce the a priori assumptions on the domain $\Omega$ and the coefficients $\sigma, q$. After defining the local Cauchy data, we state the stability result (Theorem \ref{stability}) and Corollary \ref{corollary}. In Section \ref{sec: 3} we introduce the main tools needed to prove the theorem, namely the asymptotic estimates for the Green's function near the discontinuity interface (Proposition \ref{prop: asymptotic estimates}) and the quantitative estimates of the unique continuation (Proposition \ref{proposizione unique continuation finale}). In Section \ref{sec: 4} we prove the Lipschitz stability estimate (Theorem \ref{stability}). In Section \ref{sec5} we give a sketch of the proofs the technical Propositions introduced in Section \ref{sec: 3}. In the Appendix we demonstrate the stability at the boundary for both the coefficients $\sigma$ and $q$. % This will be the object of future study. %In Section \ref{sec: 5} we show that from the proof of Theorem \ref{stability} we can derive a H\"older stability estimate in terms of local Cauchy data that is valid for a family of matrix functions of the form $A(x,\gamma(x))$, with $\gamma$ a scalar $W^{1,p}$ function for $n<p\leq\infty$,
%and $q\in L^{\infty}(\Omega)$ (Theorem \ref{thm: stability at the boundary}).

%%%%%%%%%%%%%%%%%%%%%%%%%%%%%%%%%%%%%%%%%%%%%%%%%%%%%
%Section 2
%%%%%%%%%%%%%%%%%%%%%%%%%%%%%%%%%%%%%%%%%%%%%%%%%%%%%
\section{Notation and Main result}\label{sec: 2}

In this section we recall the main definitions and summarise the \emph{a-priori} information concerning the domain $\Omega$ and the coefficients $\sigma, q$ of \eqref{eqn: main bvp}. Then we state the Lipschitz stability estimate (Theorem \ref{stability}).

For a point $x=(x_1, x_2,\dots,x_n)^t \in \mathbb{R}^n$, we can write $x=(x',x_n)$, where $x'\in\mathbb{R}^{n-1}$ and $x_n\in\mathbb{R}$. For any $x\in \mathbb{R}^n$, $B_r(x), B_r'(x')$ denote the open balls in $\mathbb{R}^{n},\mathbb{R}^{n-1}$ centred in $x$ and $x'$ respectively with radius $r>0$. Let $B_r=B_r(0)$ and $B_r'=B_r'(0)$. Let $\R^n_{\pm} = \{ (x',x_n)\in \R^{n-1}\times \R \,:\, x_n\gtrless 0 \}$ the positive (negative) real half-space, $B^{\pm}_r = B_r\cap \R^n_{\pm}$ the positive (negative) half-ball centred in the origin.

\begin{definition}\label{def: C^2 boundary}
    We say that $\Omega\subset\R^n$ has the \emph{boundary of class $C^2$ with constants $r_0, L>0$} if for any point $P\in\p\Omega$ there exists a rigid transformation under which $P$ coincides with the origin $0$ and
    \[
    \Omega\cap B_{r_0}=\left\{x\in B_{r_0}\,:\,x_n>\varphi(x')\right\},
    \]
    where $\varphi$ is a $C^2$ function on $\overline{B'_{r_0}}$ such that 
    \[
    \varphi(0)=|\nabla\varphi(0)|=0 \quad \text{and} \quad \|\varphi\|_{C^{2}(B'_{r_0})}\leq L r_0,
    \]
    with
    \[
    \|\varphi\|_{C^2(\overline{B'_{r_0}})} = \sum_{|\beta|\leq 2} r_0^{|\beta|} \|\p^{\beta} \varphi\|_{L^{\infty}(B'_{r_0})},
    \]
    with $\beta=(\beta_1,\dots,\beta_n)\in \N_0^n$, $\p^{\beta}=\p_{x_1}^{\beta_1}\p_{x_2}^{\beta_2}\dots \p_{x_n}^{\beta_n}$.
\end{definition}
	
\begin{definition}\label{def: flat portion}
    Let $\Omega\subset \R^n$ be a bounded domain. A boundary portion $\Sigma$ of $\partial\Omega$ is said to be a \emph{flat portion} of size $r_0>0$ if for each point $P\in\Sigma$ there exists a rigid transformation under which $P$ coincides with the origin $0$ and
    \begin{equation*}
            \Sigma\cap B_{r_{0}} = \{x\in B_{r_0}\,:\,x_n=0\},\qquad
            \Omega\cap B_{r_{0}} =\{x\in B_{r_0}\,:\,x_n>0\}.
    \end{equation*}
\end{definition}
	
%	Denote with $Sym_n$ the space of $n\times n$ real, symmetric matrices.
\subsection{A priori information on the domain}\label{sec: domain}

Consider $\Omega\subset \R^n$ a bounded, measurable domain with boundary $\p\Omega$ of class $C^2$ with constants $r_0, L$ such that
\begin{equation}\label{eqn: misura omega}
    |\Omega|\leq C r_0^n,
\end{equation}
where $|\Omega|$ denotes the Lebesgue measure of $\Omega$ and $C$ is a positive constant. Let $\Sigma \subset \p\Omega$ be a flat portion of size $r_0$. We assume that there exists a partition of bounded domains $\{D_m\}_{m=1}^N$, $N\in \N$, $N>1$, contained in $\Omega$ that satisfies the following conditions:
\begin{itemize}
    \item[(D1)] Each $D_m$ for $m=1,\dots,N$ is connected with boundaries $\p D_m$ of class $C^2$ with constants $r_0$, $L$. These domains are pairwise non-overlapping.
    \item[(D2)] $\displaystyle \overline{\Omega} = \bigcup_{m=1}^{N}\overline{D}_m$.
    \item[(D3)] There exists a region, denoted by $D_1$, such that the intersection $\partial{D}_1\cap\Sigma$ contains a flat portion $\Sigma_1$ of size $r_0\slash 3$.  For any index $m\in\{2,\dots , N\}$ we assume that the intersection
    \[\partial{D}_{m}\cap \partial{D}_{m+1}\]
    contains a flat portion $\Sigma_{m+1}$ of size $r_0\slash 3$ such that $\Sigma_{m+1}\subset\Omega$. Furthermore, we assume that there exist a point $P_{m+1}\in\Sigma_{m+1}$ and a rigid transformation under which $P_{m+1}$ coincides with the origin $0$ and
    \begin{equation*}
	\begin{split}
			\Sigma_{m+1}\cap B_{\frac{r_0}{3}} 
				=\Big\{x\in B_{\frac{r_0}{3}}\,:\,x_n=0\Big\}, \qquad
				D_{m}\cap B_{\frac{r_0}{3}} 
				=\left\{x\in B_{\frac{r_0}{3}}\,:\,x_n<0\right\}.
			\end{split}
		\end{equation*}
    Notice that since the boundary is of class $C^2$, for each pair of contiguous subdomains one can simply consider a local diffeomorphism that flattens the boundary. However, in view of proving the stability estimate, it is convenient to give such assumption for granted.
\end{itemize}

\subsection{A priori information on the coefficients}\label{sec: coefficient}	

Consider the elliptic equation
\begin{equation}\label{eqn: main eq}
    \mbox{div}(\sigma \nabla u) + q u =0 \quad \mbox{in }\Omega.
\end{equation}
The coefficient $\sigma$ is a bounded, measurable $n\times n$ real matrix function of the form
\begin{equation}
   \sigma(x) = \gamma(x)\,A(x), \qquad x\in \Omega,
\end{equation}
that satisfies the following conditions:
\begin{itemize}
  \item[(C1)]  The scalar function $\gamma$ is piecewise affine and has the form
    \begin{align*}
        \gamma(x) = \sum_{j=1}^N \gamma_j(x) \chi_{D_j}(x), \qquad \gamma_j(x) = a_j + b_j\cdot x,\qquad x\in \Omega
    \end{align*}
    for $a_j\in \R$, $b_j\in \R^n$ and $D_j$ for $j=1,\dots,N$ are the given subdomains of the partition as in Section \ref{sec: domain}. Moreover, there exists a constant $\bar{\gamma}>1$ such that for a.e. $x\in\Omega$,
    \begin{equation}
        \bar{\gamma}^{-1} \leq \gamma(x) \leq \bar{\gamma}, \qquad \text{for any }j=1,\dots,N.
    \end{equation}
    \item[(C3)] The matrix function $A$ belongs to the space $C^{1,1}(\Omega,Sym_n)$ and there is a constant $\bar{A}>0$ such that
    \begin{equation}\label{Abar}
        \|a_{ij}\|_{C^{1,1}(\Omega)} \leq \bar{A}\qquad \text{for }i,j=1,\dots,n,
    \end{equation}
    where
    \[
    \|a_{ij}\|_{C^{1,1}(\Omega)} = \|a_{ij}\|_{C^1(\Omega)} + r_0^2 \sup_{x,y\in \Omega, x\neq y} \frac{|a_{ij}(x) - a_{ij}(y)|}{|x-y|}.
    \]
    \item[(C4)] (\emph{Uniform ellipticity condition)} There exists a constant $\bar{\lambda}>1$ such that
    \begin{equation}
        \bar{\lambda}^{-1} |\xi|^2 \leq A(x)\xi\cdot\xi \leq \bar{\lambda} |\xi|^2 \qquad \mbox{for any }\xi\in\R^n, \mbox{ for a.e. }x\in\Omega.
    \end{equation}
    \item[(C5)] The coefficient $q\in L^{\infty}(\Omega)$ is a piecewise affine function of the form
    \begin{equation*}
        q(x) = \sum_{j=1}^N q_j(x) \chi_{D_j}(x),\qquad q_j(x) = c_j + d_j\cdot x,\qquad x\in\Omega,
    \end{equation*}
    for $c_j\in \R$, $d_j\in \R^n$ and $D_j$ for $j=1,\dots,N$ are the given subdomains of the partition as in Section \ref{sec: domain}. 
    \item[(C6)] There are $\bar{\sigma}, \bar{q} >0$ such that
    \begin{equation}
        \|\sigma\|_{L^{\infty}(\Omega)} \leq \bar{\sigma}, \qquad \|q\|_{L^{\infty}(\Omega)} \leq \bar{q}.
    \end{equation}
    \end{itemize}

The collection of constants $\{r_0, L, N, \bar{\lambda}, \bar{\gamma}, \bar{\sigma}, \bar{q} \}$ along with the dimension $n\geq 3$ are called the \emph{a priori data}. We would like to remark here that we decide to follow the so-called \emph{constant variable convention} that consists in denoting with the letter $C$ positive constants that depend on the a priori data only and that may vary from line to line in the inequalities.

\begin{remark}
  The class of functions $\gamma(x), q(x)$ form a finite dimensional linear subspace. The $L^{\infty}$ norms of $\gamma, q$ are equivalent to the following norms:
  \begin{align*}
    |||\gamma||| = \max\limits_{j=1,\dots,N} \{|a_j| + |b_j|\},\qquad
    |||q||| = \max\limits_{j=1,\dots,N} \{|c_j| + |d_j|\},
  \end{align*}
  modulo some constants depending on the a priori data.
\end{remark}

\subsection{Local Cauchy data set}

Before describing the local Cauchy data, we recall the definition of some useful trace spaces. Let $H^{\frac{1}{2}}_{co}(\Sigma)$ be the trace space of functions having compact support in $\Sigma$. The space $H^{\frac{1}{2}}_{00}(\Sigma)$ is the closure of $H^{\frac{1}{2}}_{co}(\Sigma)$ under the norm $H^{\frac{1}{2}}(\p\Omega)$. The distributional space $H^{-\frac{1}{2}}(\p\Omega)|_{\Sigma}$ is the restriction of the trace space of distributions $H^{-\frac{1}{2}}(\p\Omega)$ to $\Sigma$. 

For $f\in H^{\frac{1}{2}}_{00}(\Sigma)$, the boundary value problem
\begin{equation}
    \begin{cases}
      \mbox{div}(\sigma \nabla u) + q u =0, &\mbox{in }\Omega,\\
      u = f, &\mbox{on }\p\Omega,
   \end{cases}
\end{equation}
may have no unique solution or any solution in the natural Sobolev space, since we do not make no assumption on the sign of $q$. In this general framework, the Dirichlet-to-Neumann map may not be defined.
As in \cite{Alessandrini2018} (see also \cite[§ 5, pag. 152]{Isakov2017}), we find convenient to introduce a set to model the pairs $(u|_{\p\Omega}, \sigma \nabla u\cdot \nu|_{\p\Omega})$.
\begin{definition}\label{def: localcauchy}
    The \emph{local Cauchy data} associated to $\sigma, q$ having zero first component on $\p\Omega\setminus \overline{\Sigma}$ is the set
    \begin{align*}
    \CC_{\sigma,q}(\Sigma) = \Big\{ (f,g)&\in H^{\frac{1}{2}}_{00}(\Sigma)\times H^{-\frac{1}{2}}(\p\Omega)|_{\Sigma}\,:\,\mbox{there exists }u\in H^1(\Omega)\,\text{such that }\\
    &\mbox{div}(\sigma\nabla u) +qu=0\quad\mbox{in }\Omega,\\
    &u|_{\p\Omega} = f,\\
    &\la \sigma\nabla u\cdot \nu|_{\p\Omega}, \va \ra = \la g,\va \ra\quad \mbox{for any }\va\in H^{\frac{1}{2}}_{00}(\Sigma)\Big\}.
    \end{align*}
\end{definition}
Notice that $\CC_{\sigma,q}(\Sigma)$ is a subset of $H^{\frac{1}{2}}_{00}(\Sigma)\times H^{-\frac{1}{2}}(\p\Omega)|_{\Sigma}$, which is a Hilbert space with the norm
\[
\|(f,g)\|_{H^{\frac{1}{2}}_{00}(\Sigma)\bigoplus H^{-\frac{1}{2}}(\p\Omega)|_{\Sigma}} = \left( \|f\|_{H^{\frac{1}{2}}_{00}(\Sigma)}^2 + \|g\|_{H^{-\frac{1}{2}}(\p\Omega)|_{\Sigma}}^2\right)^{\frac{1}{2}}.
\]
If $S_1, S_2$ are two closed subspace of a given Hilbert space $\HH$, the distance between $S_1$ and $S_2$ is defined as 
\begin{equation}
    d(S_1,S_2) = \max\left\{\inf_{h\in S_2 \setminus \{0\}} \sup_{k\in S_1} \frac{\|h-k\|_{\HH}}{\|h\|_{\HH}}, \inf_{k\in S_1 \setminus \{0\}} \sup_{h\in S_2} \frac{\|h-k\|_{\HH}}{\|k\|_{\HH}} \right\}.
\end{equation}
If $d(S_1,S_2)<1$ then
\[
d(S_1,S_2) = \inf_{h\in S_2 \setminus \{0\}} \sup_{k\in S_1} \frac{\|h-k\|_{\HH}}{\|h\|_{\HH}},
\]
(see \cite{Knyazev2010} and \cite{Alessandrini2018}). In our framework, for two pairs of coefficients $\{\sigma^{(k)}, q^{(k)}\}$ with $k=1,2$, the corresponding local Cauchy data are the sets $\CC_1=\CC_{\sigma^{(1)},q^{(1)}}(\Sigma)$, $\CC_2=\CC_{\sigma^{(2)}, q^{(2)}}(\Sigma)$. Since we are interested in the occurrence when $\CC_1$ and $\CC_2$ are rather close, the distance is given by
\begin{equation}\label{eqn: distance}
d(\CC_1,\CC_2) = \inf_{(f_2,g_2)\in \CC_2 \setminus \{(0,0)\}} \sup_{(f_1,g_1)\in \CC_1} \frac{\|(f_2,g_2)-(f_1,g_1)\|_{\HH}}{\|(f_2,g_2)\|_{\HH}},
\end{equation}
with $\HH=H^{\frac{1}{2}}_{00}(\Sigma)\oplus H^{-\frac{1}{2}}(\p\Omega)|_{\Sigma}$. Notice that the above distance is computed between the closures $\overline{\CC}_1$ and $\overline{\CC}_2$ and it can be shown that the Cauchy data $\CC_1, \CC_2$ are indeed closed. Moreover, if the direct problem is well-posed, then the local Cauchy data represents the graph of the local Dirichlet to Neumann map.

We state the stability estimate, the proof of which is deferred to Section \ref{sec: 4}.

\begin{theorem}\label{stability}
    Let $\Omega\subset\R^n$, $\Sigma\subset\p\Omega$ be a bounded domain and a non-empty portion as in Section \ref{sec: domain}. Let $\{\sigma^{(i)}, q^{(i)}\}$ for $i=1,2$ be two pairs of parameters that satisfy the a priori assumptions in Section \ref{sec: coefficient}. Let $\CC_1, \CC_2$ be the corresponding local Cauchy data and assume that $d(\CC_1,\CC_2)<1$. Then there exists a constant $C>0$ depending on the a-priori data only such that
    \begin{equation}
       \|\sigma^{(1)}-\sigma^{(2)}\|_{L^{\infty}(\Omega)}+\|q^{(1)}-q^{(2)}\|_{L^{\infty}(\Omega)} \leq C d(\CC_1, \CC_2).
    \end{equation}
\end{theorem}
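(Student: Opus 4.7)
The plan is to combine a Hölder-type boundary estimate with an iterative propagation argument that walks along the chain $D_1,\dots,D_K$ of adjacent subdomains, upgrading boundary smallness to interior smallness by singular solutions and quantitative unique continuation. Write $\ep=d(\CC_1,\CC_2)$ and $E=\max\{\|\sigma^{(1)}-\sigma^{(2)}\|_{L^\infty(\Omega)},\|q^{(1)}-q^{(2)}\|_{L^\infty(\Omega)}\}$; by the norm equivalence in the Remark, it suffices to prove $E\leq C\ep$. First I would establish the boundary estimate \eqref{eqn: boundary}. The tool is an Alessandrini-type identity obtained by inserting the Green's functions $G^{(i)}(\cdot,y^{(i)})$ of \cite{Foschiatti2023} into the weak formulations of the two equations, producing
\[\int_\Omega (\sigma^{(1)}-\sigma^{(2)})\nabla G^{(1)}\cdot\nabla G^{(2)} \diff x + \int_\Omega (q^{(1)}-q^{(2)})\,G^{(1)}G^{(2)} \diff x \;\leq\; C\,\ep\,\|G^{(1)}\|_{H^1}\|G^{(2)}\|_{H^1}.\]
Letting $y^{(1)},y^{(2)}$ approach a common point $y_0\in\Sigma_1$ and invoking Proposition \ref{prop: asymptotic estimates} controls the blow-up rates; the gradient-gradient term has stronger singularity than the potential-potential one, and this difference of homogeneity lets me extract $\gamma^{(1)}_1(y_0)-\gamma^{(2)}_1(y_0)$ first and, by subtraction, $q^{(1)}_1(y_0)-q^{(2)}_1(y_0)$, in the Hölder form \eqref{eqn: boundary}.

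Next I would set up the induction on the chain. Because $\gamma_m$ and $q_m$ are affine on each $D_m$, the $L^\infty(D_m)$ norm of the differences is equivalent to the same norm on any fixed flat portion of $\partial D_m$, with constants depending only on the a priori data. The base case on $D_1$ follows by combining \eqref{eqn: boundary} with this equivalence on $\Sigma_1$, giving
\[\|\gamma^{(1)}-\gamma^{(2)}\|_{L^\infty(D_1)}+\|q^{(1)}-q^{(2)}\|_{L^\infty(D_1)}\leq C(E+\ep)\,\Bigl(\tfrac{\ep}{E+\ep}\Bigr)^{\tilde\eta_1},\]
which is of the target form with a Hölder modulus. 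To propagate from $D_m$ to $D_{m+1}$, I would place the singular poles $y^{(i)}$ of the Green's functions on the normal to $\Sigma_{m+1}$, one on each side, and apply the Alessandrini identity again, splitting the integral as $\int_{D_m}(\cdot)+\int_{D_{m+1}}(\cdot)$. On $D_m$ the induction hypothesis controls the integrand; on $D_{m+1}$, Proposition \ref{prop: asymptotic estimates} gives the leading-order blow-up of $\nabla G^{(1)}\cdot\nabla G^{(2)}$ and of $G^{(1)}G^{(2)}$ in terms of the distance $\delta$ from $y^{(i)}$ to $\Sigma_{m+1}$. Following the two-step scheme of the introduction, the leading term in $\delta$ isolates $\gamma^{(1)}_{m+1}-\gamma^{(2)}_{m+1}$ at the interface (step i), and substituting this estimate back into the identity with a different choice of singular solutions extracts $q^{(1)}_{m+1}-q^{(2)}_{m+1}$ from the next-order term (step ii).

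The crucial ingredient enabling each step is the transport of smallness of $u^{(1)}-u^{(2)}$ from a neighbourhood of $\Sigma$ to a neighbourhood of $\Sigma_{m+1}$ along the chain, provided by the quantitative unique continuation of Proposition \ref{proposizione unique continuation finale}, built on the three-spheres inequality for piecewise Lipschitz coefficients of \cite{Carstea2020}. Each passage degrades the estimate by one composition with a logarithmic modulus, so after $K-1$ steps the compound modulus is the $3(K-1)$-fold iterate $\omega^{(3(K-1))}_{\tilde\eta_K}$, still of logarithmic type, yielding \eqref{eqn: omegak}. Choosing $K$ so that the left-hand side of \eqref{eqn: omegak} realises $E$ (again via the equivalence of affine $L^\infty$ norms), I obtain
\[E \;\leq\; C(E+\ep)\,\omega^{(3(K-1))}_{\tilde\eta_K}\!\Bigl(\tfrac{\ep}{E+\ep}\Bigr).\]
The trivial case $E\leq\ep$ is immediate; otherwise $E+\ep\leq 2E$, and a finite iterate of a logarithmic modulus of continuity is still invertible, so the inequality forces $\ep/(E+\ep)$ to be bounded below by a universal constant, hence $E\leq C\ep$.

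The hardest step will be the clean decoupling of the $\gamma$- and $q$-contributions at every iteration: both differences enter the same Alessandrini identity, and only the mismatch in singularity orders between $\nabla G^{(1)}\cdot\nabla G^{(2)}$ and $G^{(1)}G^{(2)}$ makes it possible to peel them off one at a time. Doing so while absorbing the already-known information on $D_m$, keeping quantitative control of the error generated by unique continuation, and ensuring that the resulting iterated logarithmic modulus remains invertible after $K$ compositions is the delicate bookkeeping underlying the whole scheme.
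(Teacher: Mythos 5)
Your overall architecture (boundary estimate, chain of subdomains, singular solutions, quantitative unique continuation, iterated logarithmic modulus, final inversion) coincides with the paper's, but there is a genuine gap in the step you rely on to pass from interface information to interior information. You assert that, because $\gamma_m$ and $q_m$ are affine on $D_m$, the $L^\infty(D_m)$ norm of the differences is \emph{equivalent} to the $L^\infty$ norm on a flat portion of $\p D_m$. This is false: the flat portion $\Sigma_m$ lies in a hyperplane, and an affine function such as $x\mapsto b\,(x-P_m)\cdot e_n$ vanishes identically on that hyperplane while being of size $\sim b\,r_0$ inside $D_m$. The sup on $\Sigma_m\cap B_{r_0/4}(P_m)$ only controls the constant term and the \emph{tangential} part of the gradient; the normal component $\p_\nu(\gamma^{(1)}_m-\gamma^{(2)}_m)(P_m)$ (and likewise for $q$) must be estimated separately. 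This is precisely where the paper invests most of its technical effort: the normal derivative of $\gamma^{(1)}-\gamma^{(2)}$ is extracted from the differentiated singular solution $\p_{y_n}\p_{z_n}S_k(w,w)$ via the asymptotics \eqref{eqn: asymptotic2}, and the normal derivative of $q^{(1)}-q^{(2)}$ from the fourth-order object $\p^2_{y_iy_j}\p^2_{z_iz_j}S_k(w,w)$ via \eqref{eqn: asymptotic3}, together with the corresponding unique-continuation bounds \eqref{estim2}--\eqref{estim3}. Your proposal never controls these normal derivatives, so the induction along the chain does not close, neither in the base case on $D_1$ nor in the step $D_m\to D_{m+1}$.

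A second, related weakness is the decoupling mechanism you propose for $q$. Extracting $q^{(1)}-q^{(2)}$ ``by subtraction'' from the next-order term of the \emph{same} undifferentiated identity does not work: in $S_k(w,w)$ the contribution of the potential term $G_1G_2$ is of order $r^{4-n}$ (up to logarithms), which is dominated by the error $C\,E\,r^{2-n+\theta_1}$ coming from the asymptotic expansion $G\approx H$ and by the term $C\,E\,r^{3-n}$ produced by the affine variation of $\gamma$; these unknown-size errors swamp the $q$-information. The paper instead recovers the interface values of $q$ from $\p_{y_n}\p_{z_n}S_k$, where the potential term appears at order $r^{2-n}$, and absorbs the \emph{more} singular $\gamma$-term (of order $r^{-n}$) using the bound on $\gamma^{(1)}_k-\gamma^{(2)}_k$ already established in step (i); so the peeling-off is driven by the previously obtained $\gamma$ estimate plus a hierarchy of differentiated singular solutions, not by homogeneity mismatch within a single identity. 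To repair the argument you would need to incorporate all four asymptotic estimates of Proposition \ref{prop: asymptotic estimates} and the derivative estimates of Proposition \ref{proposizione unique continuation finale}, and to run, at every interface $P_k$, the four separate extractions (interface sup and normal derivative, for $\gamma$ and for $q$) that produce the three compositions of $\omega$ per step appearing in \eqref{eqn: omegak}.
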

The following Corollary is a straightforward consequence of the proof of Theorem \ref{stability}, hence we state the result and omit the proof.
\begin{corollary}\label{corollary}
    Under the assumptions of Theorem \ref{stability}, there exist constants $C>0$, $0<\eta<1$ depending on the a-priori data only such that
    \begin{equation}
        \|\sigma^{(1)}-\sigma^{(2)}\|_{L^{\infty}(\Sigma)} + \|q^{(1)}-q^{(2)}\|_{L^{\infty}(\Sigma)} \leq C(d(\CC_1,\CC_2) + E)^{1-\eta} d(\CC_1,\CC_2)^{\eta},
    \end{equation}
    with $E=\max\{\|\sigma^{(1)}-\sigma^{(2)}\|_{L^{\infty}(\Sigma)}, \|q^{(1)}-q^{(2)}\|_{L^{\infty}(\Sigma)}\}$.
\end{corollary}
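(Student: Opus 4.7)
The plan is to deduce Corollary \ref{corollary} directly from Theorem \ref{stability} by restricting the interior Lipschitz estimate to $\Sigma$ and then rewriting it in the H\"older form displayed in the statement. No new singular-solution analysis is required, which is why the author flags the result as a straightforward consequence of the proof.

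First, Theorem \ref{stability} yields
\begin{equation*}
\|\sigma^{(1)}-\sigma^{(2)}\|_{L^{\infty}(\Omega)}+\|q^{(1)}-q^{(2)}\|_{L^{\infty}(\Omega)} \le C\,d(\CC_1,\CC_2).
\end{equation*}
The boundary traces of $\sigma^{(i)}, q^{(i)}$ on $\Sigma$ are well-defined in view of the piecewise structure: on each region $D_m$ abutting $\Sigma$ the coefficients are continuous (a piecewise-affine scalar times the $C^{1,1}$ matrix $A$), so the one-sided trace on $\Sigma\cap\p D_m$ is classical and
\begin{equation*}
\|\sigma^{(1)}-\sigma^{(2)}\|_{L^{\infty}(\Sigma\cap \p D_m)} \le \|\sigma^{(1)}-\sigma^{(2)}\|_{L^{\infty}(D_m)},
\end{equation*}
and likewise for $q$. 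Taking the maximum over the finitely many subdomains meeting $\Sigma$ gives
\begin{equation*}
\|\sigma^{(1)}-\sigma^{(2)}\|_{L^{\infty}(\Sigma)}+\|q^{(1)}-q^{(2)}\|_{L^{\infty}(\Sigma)} \le C\,d(\CC_1,\CC_2).
\end{equation*}

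Second, I simply recast the right-hand side in H\"older form. For any fixed $\eta\in(0,1)$ and any $E\ge 0$ one has the trivial inequality $d(\CC_1,\CC_2)^{1-\eta}\le (d(\CC_1,\CC_2)+E)^{1-\eta}$, hence
\begin{equation*}
d(\CC_1,\CC_2) = d(\CC_1,\CC_2)^{1-\eta}\,d(\CC_1,\CC_2)^{\eta} \le (d(\CC_1,\CC_2)+E)^{1-\eta}\,d(\CC_1,\CC_2)^{\eta}.
\end{equation*}
Choosing $E$ to be the maximum appearing in the statement of the corollary yields the desired bound with the same $C$ and any $\eta\in(0,1)$. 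There is essentially no obstacle; the only subtle point, the well-definedness of the boundary traces of the piecewise-affine coefficients, is handled by the regularity assumptions (C1)--(C3) and (C5) combined with the partition structure (D1)--(D3).
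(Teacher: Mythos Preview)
Your argument is correct, but it takes a different route from the one the paper has in mind. You deduce the corollary \emph{a posteriori} from the statement of Theorem \ref{stability}: once the global Lipschitz bound is in hand, you restrict to $\Sigma$ and use the elementary inequality $d(\CC_1,\CC_2)\le (d(\CC_1,\CC_2)+E)^{1-\eta}d(\CC_1,\CC_2)^{\eta}$, valid for any $E\ge 0$ and any $\eta\in(0,1)$. This is perfectly valid and costs essentially nothing.

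The paper's intended derivation is different in spirit. There the boundary H\"older estimate is not a consequence of the final Lipschitz bound but rather its \emph{starting point}: it is the $k=1$ step of the iterative scheme in Section~\ref{sec: 4}, proved directly in the Appendix via the singular-solution blow-up analysis near $\Sigma_1$ (the asymptotics of Proposition~\ref{prop: asymptotic estimates}) and an optimization in the distance parameter $r$. In that argument the exponent $\eta$ arises concretely from the asymptotic exponents $\theta_1,\theta_2$, and the estimate is obtained \emph{before} any unique-continuation propagation into the interior. So the paper's route gives the corollary as an intermediate, self-contained byproduct of the proof that does not rely on the completed Theorem~\ref{stability}, whereas your route is shorter but uses the full strength of the theorem to recover a logically weaker statement.
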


\section{Auxiliary Propositions}\label{sec: 3}

The proof of Theorem \ref{stability} is based on the method of singular solutions and the quantitative estimates of unique continuation. In this section we introduce the main tools and propositions needed. In Section \ref{subsec: green} we define the Green's functions and we describe their asymptotic behaviour near the discontinuity interfaces. The Green functions are weak solutions to a well-posed boundary value problem defined on an enlarged domain $\Omega_0$ with Robin data prescribed on a small portion of $\p\Omega_0$ that is not contained in $\p\Omega$. 
In Section \ref{sec: qeuc} we introduce the singular integrals and the quantitative estimates of propagation of smallness.

\subsection{Green functions and asymptotic estimates}\label{subsec: green}
We introduce the enlarged domain
\begin{equation}
    \Omega_0=\overset{\circ}{\overline{\Omega\cup D_0}},
\end{equation}
where $D_0\subset (\R^n\setminus \Omega)$ is a measurable domain with boundary of class $C^2$ such that $\p D_0 \cap \p\Omega \subseteq \Sigma$. Let $\Sigma_0\subset (\p\Omega_0\setminus \p\Omega)$ be a non-empty flat portion of size $r_0\slash 3$. For $r>0$, consider the domain perturbation
$(\Omega_0)_r = \left\{x\in\Omega_0\,:\,\text{dist}(x,\p\Omega_0)>r \right\}.$

Let $\{\sigma,q\}$ be a pair of coefficients that satisfy the assumptions of Section \ref{sec: coefficient}. We extend them on $D_0$ by setting $\sigma|_{D_0}=Id_n$, $\gamma|_{D_0}=1$ and $q|_{D_0}=1$, where $Id_n$ denotes the $n\times n$ identity matrix. With an abuse of notation, we denote with the same letters the two extended coefficients when we deal with the enlarged domain $\Omega_0$.

Let $G$ be the Green function associated to the elliptic operator $\text{div}(\sigma \nabla \cdot) + \cdot$. For any $y\in\Omega_0$, let $G(\cdot,y)$ be the unique distributional solution to the problem
\begin{equation}\label{eqn: green system}
    \begin{cases}
        \,\mbox{div}(\sigma \nabla G(\cdot,y)) + q G(\cdot,y) =0, &\mbox{in }\Omega_0,\\
        \,G(\cdot,y) = 0, &\mbox{on }\p\Omega_0\setminus \Sigma_0,\\
        \,\sigma \nabla G(\cdot,y)\cdot \nu + i G(\cdot,y) = 0, &\mbox{on }\Sigma_0,
    \end{cases}
\end{equation}
(see \cite[Lemma 4.1]{Foschiatti2023}).
Moreover, there exists a positive constant $C$ that depends on $\lambda, n$ only such that
\begin{equation}\label{eqn: green upper bound}
    0<|G(x,y)|<C|x-y|^{2-n}, \qquad \text{for any }x,y\in \Omega_0, \, x\neq y.
\end{equation}
Fix an index $m\in \{0,\dots,N-1\}$, let $P_{m+1}\in\Sigma_{m+1}$ and assume that, up to a rigid transformation, $P_{m+1}$ coincides with the origin $0$ and $\Sigma_{m+1}$ is a flat hypersurface of size $r_0\slash 3$. Define the following quantities:
\[
\gamma^+=\gamma_{m+1}(0), \qquad \gamma^-=\gamma_m(0), \qquad J=\sqrt{A(0)^{-1}}, \qquad |J|=\mbox{det}J.
\]
The fundamental solution $H$ associated to the elliptic operator div$((\gamma^-\chi_{\R^n_-}(\cdot) + \gamma^+ \chi_{\R^n_+}(\cdot)) A(0) \nabla\cdot)$ in $\R^n$ is given by the formula
\begin{equation}\label{eqn: fundamentalsol}
H(x,y)= |J|\displaystyle
    \begin{cases}
        \frac{1}{\gamma^+} \Gamma(Lx,Ly) + \frac{\gamma^+-\gamma^-}{\gamma^+(\gamma^++\gamma^-)} \Gamma(Lx,(Ly)^*) &\mbox{if }x_n, (Ly)_n>0,\\
        \frac{2}{\gamma^+ + \gamma^-} \Gamma(Lx,Ly) &\mbox{if }x_n\cdot (Ly)_n<0,\\
        \frac{1}{\gamma^-} \Gamma(Lx,Ly) + \frac{\gamma^--\gamma^+}{\gamma^-(\gamma^+ + \gamma^-)} \Gamma(Lx,(Ly)^*) &\mbox{if }x_n, (Ly)_n<0,
    \end{cases}
\end{equation}
where $L:\R^n\rightarrow \R^n$ is a linear map such that $L^{-1}\cdot (L^{-1})^T = A(0)$ (see \cite{Foschiatti2021} or \cite{Gaburro2015}).

\begin{proposition}\label{prop: asymptotic estimates}
    Fix $m\in \{0,\dots,N-1\}$. Let $Q_{m+1}\in B_{\frac{r_0}{4}}(P_{m+1})\cap \Sigma_{m+1}$, where $\Sigma_{m+1}$ is the flat portion defined in Section \ref{sec: domain}.
    For $r\in (0,\frac{r_0}{8})$, set $y_{m+1}=Q_{m+1} - r\nu(Q_{m+1})$, where $\nu(Q_{m+1})$ is the outward unit normal of $\p D_{m}$ at $P_{m+1}$ and let $x\in B_{\frac{r_0}{4}}(Q_{m+1})\cap D_{m+1}$. Then there exist $C_1, C_2, C_3$ positive constants, $0<\theta_1,\theta_2,\theta_3<1$ that depend on the a priori data only such that
    \begin{eqnarray}
        |\nabla_x G(x,y_{m+1}) - \nabla_x H(x,y_{m+1})| &\leq& C_1 |x-y_{m+1}|^{1-n+\theta_1},\label{eqn: asymptotic1}\\
        |\nabla_x \nabla_y G(x,y_{m+1}) - \nabla_x \nabla_y H(x,y_{m+1})| &\leq& C_3 |x-y_{m+1}|^{-n+\theta_2},\label{eqn: asymptotic2}\\
        |\nabla_y G(x,y_{m+1}) - \nabla_y H(x,y_{m+1})| &\leq& C_2 |x-y_{m+1}|^{1-n+\theta_3},\label{eqn: asymptotic}\\
        |\nabla^2_y G(x,y_{m+1}) - \nabla^2_y H(x,y_{m+1})| &\leq& C_2 |x-y_{m+1}|^{1-n}.\label{eqn: asymptotic3}
    \end{eqnarray}
\end{proposition}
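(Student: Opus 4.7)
The plan is to study the difference $W(x,y) := G(x,y) - H(x,y)$, which by construction has the same leading singularity as both $G$ and $H$ at $y = y_{m+1}$ and is therefore expected to be noticeably more regular than either term alone; the strategy closely parallels the one used in \cite{Foschiatti2021, Gaburro2015} for the pure conductivity case, the term $qu$ entering only as a lower-order perturbation. After a rigid transformation and the linear change of variables $z = Lx$ with $L^{-1}(L^{-1})^T = A(0)$, I may assume $P_{m+1} = 0$, $\Sigma_{m+1} \subset \{x_n = 0\}$ locally, $D_{m+1} \subset \{x_n > 0\}$, $D_m \subset \{x_n < 0\}$ and $A(0) = Id_n$. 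Setting $\tilde\sigma(x) := (\gamma^- \chi_{\R^n_-}(x) + \gamma^+ \chi_{\R^n_+}(x))\, A(0)$, the function $H(\cdot,y)$ is the fundamental solution of $\text{div}(\tilde\sigma \nabla \cdot)$ in $\R^n$, and subtracting it from the equation satisfied by $G$ yields
\begin{equation*}
    \text{div}_x\bigl(\sigma \nabla_x W\bigr) + q W \;=\; -\text{div}_x\bigl((\sigma - \tilde\sigma)\nabla_x H\bigr) - q H
\end{equation*}
away from $y$, together with the natural transmission conditions across $\{x_n = 0\}$. The decisive structural fact is that on each of the two half-balls $B_{r_0/3} \cap \{\pm x_n > 0\}$, both $\sigma$ and $\tilde\sigma$ are Lipschitz and agree at the origin, whence $|\sigma(x) - \tilde\sigma(x)| \leq C|x|$ there.

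The core of the argument is a scaling argument around $P_{m+1} = 0$. Fix $y = y_{m+1}$ (at distance $r$ from $0$) and introduce $\tilde W(z) := r^{n-2} W(rz, y_{m+1})$, $\tilde\sigma_r(z) := \sigma(rz)$, $\tilde q_r(z) := r^2 q(rz)$; apply the same rescaling to $H$. Then $\tilde W$ satisfies a divergence-form equation on a ball of fixed radius, with piecewise Lipschitz coefficients whose ellipticity and Lipschitz constants are controlled independently of $r$, and with right-hand side of size $O(r)$: the factor $|\sigma - \tilde\sigma| \leq C|x| \leq Cr$ contributes one power of $r$, while the rescaled sizes of $\nabla H$ and $\nabla^2 H$ exactly match the $r^n$-factor produced by rescaling the divergence form. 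Applying the quantitative Hölder regularity for transmission problems with piecewise Lipschitz coefficients of \cite{Carstea2020}, together with a Caccioppoli-type gradient estimate on each side of the interface, yields
\begin{equation*}
|\tilde W(z)| + |\nabla_z \tilde W(z)| \;\leq\; C\, r^{\theta_1}
\end{equation*}
on a fixed-size ball, with $\theta_1 \in (0,1)$ depending only on the a priori data. Scaling back, and noting that for separations $|x - y_{m+1}|$ comparable to $r_0$ the same bound follows directly from \eqref{eqn: green upper bound} and the explicit form \eqref{eqn: fundamentalsol} of $H$, I obtain \eqref{eqn: asymptotic1}.

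The $y$-derivative estimates \eqref{eqn: asymptotic2}, \eqref{eqn: asymptotic} and \eqref{eqn: asymptotic3} are obtained by differentiating the equation for $W$ in the $y$-variable (exploiting the symmetry $G(x,y) = G(y,x)$ modulo boundary corrections) and repeating the rescaling argument for $\nabla_y W$ and $\nabla_y^2 W$. Each $y$-derivative asks for one more degree of regularity of coefficients and source, and the accumulated loss is exactly what produces the distinct Hölder gains $\theta_2, \theta_3$ and, in the case of \eqref{eqn: asymptotic3}, the absence of any Hölder gain beyond the single power of $|x-y|$ that the vanishing of $\sigma - \tilde\sigma$ at $0$ already supplies. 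The main obstacle throughout is the discontinuity of $\sigma$ across the flat interface $\{x_n = 0\}$: classical interior Schauder estimates do not apply, and one must invoke the piecewise Lipschitz transmission regularity of \cite{Carstea2020}, whose quantitative nature is precisely why the $C^2$ boundary regularity of $\p\Omega$ and of each $\p D_j$ is imposed in this work. Boundary contributions from $\p\Omega_0$ enter only as a smooth remainder absorbed into the constant $C$, since $x$ and $y_{m+1}$ remain a bounded distance from $\p\Omega_0$ by construction.
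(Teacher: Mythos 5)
Your reduction to the equation for $W = G - H$ and the observation $|\sigma - \sigma_0|(x)\le C|x|$ coincide with the paper's starting point, but the core step of your argument --- deducing $|\tilde W| + |\nabla_z \tilde W| \le C r^{\theta_1}$ on a fixed ball by applying local transmission regularity to the rescaled equation with ``right-hand side of size $O(r)$'' --- does not go through. Local estimates of De Giorgi/Schauder/Carstea--Wang type bound a solution on an interior set by its own norm on a slightly larger set plus norms of the data; they cannot convert small data into a small solution without some independent global control of the solution itself. After your rescaling, the only a priori information on $\tilde W(z) = r^{n-2}W(rz,y_{m+1})$ is $|\tilde W(z)| \le C|z-\tilde y|^{2-n}$ with $\tilde y = y_{m+1}/r$, $|\tilde y|=1$ (obtained from \eqref{eqn: green upper bound} and the explicit form \eqref{eqn: fundamentalsol} separately), which on the balls you actually need (the relevant regime is $|x-y_{m+1}|\sim r$, i.e. rescaled distance of order one from $\tilde y$) is $O(1)$, not $O(r^{\theta_1})$: the smallness of $G-H$ is precisely the conclusion to be proved and must come from a global mechanism. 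Moreover the rescaled right-hand side is not uniformly $O(r)$: in divergence form it is $\mathrm{div}_z\tilde F + \tilde g$ with $|\tilde F(z)| \le C\,r\,|z|\,|z-\tilde y|^{1-n}$, which near the rescaled pole $\tilde y$ fails to be in $L^2$ for $n\ge 3$, so neither a global energy estimate on the rescaled domain nor a local estimate on a ball containing $\tilde y$ applies without first splitting off the near-pole contribution (for instance by subtracting a second parametrix frozen at $y_{m+1}$), a step you do not supply; note also that no maximum-principle comparison is available since $q$ has no sign. A smaller inaccuracy: the Carstea--Wang result invoked in this paper is a three-sphere/propagation-of-smallness inequality, not a pointwise up-to-the-interface gradient estimate, and Caccioppoli gives only $L^2$ gradient control, so even granting smallness of $\tilde W$ your pointwise bound on $\nabla_z\tilde W$ at the interface would require a piecewise $C^{1,\alpha}$ transmission estimate of Li--Vogelius type, not the tools you cite.

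The paper's proof of \eqref{eqn: asymptotic} and \eqref{eqn: asymptotic3} avoids these difficulties by a representation rather than a rescaling argument: by Green's identity the remainder $R = G - H$ is written as explicit integrals of $(\sigma-\sigma_0)\nabla_z H\cdot\nabla_z G$ and $qHG$ over $\Omega_0$ plus controlled boundary terms; the contribution outside $B_{r_0/4}(Q_{m+1})$ is smooth in $y$, and for the local part one differentiates under the integral sign and integrates by parts, producing surface integrals on $\partial B$ and jump integrals on the flat interface $B'$ (which vanish unless the differentiated direction is the normal one), all of which are then bounded by elementary singular-integral estimates in which the factor $|\sigma-\sigma_0|(z)\le C|z|$ supplies the H\"older gain. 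If you wish to keep a PDE-based route, you would need to build in such a global representation (or another comparison argument producing the smallness of $W$ itself) before any local regularity step can be of use.
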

For a proof of \eqref{eqn: asymptotic1} and \eqref{eqn: asymptotic2}, see \cite[Proposition 4.3]{Foschiatti2023}, \cite[Proposition 3.1]{Foschiatti2021} and \cite[Proposition 3.4]{Alessandrini2018}. For \eqref{eqn: asymptotic} and \eqref{eqn: asymptotic3}, see Section \ref{sec5}.

\subsection{Quantitative estimates of unique continuation}\label{sec: qeuc}
Consider the following sets:
\[
\mathcal{U}_0 = \Omega, \qquad \mathcal{W}_k=\overset{\circ}{\Big(\bigcup_{m=0}^k \overline{D_m}\Big)}, \qquad \mathcal{U}_k = \Omega\setminus \overline{\mathcal{W}_k},\quad \mbox{for }k=1,\dots,N.
\]
For $y,z\in \mathcal{W}_k$, define the singular solution
\begin{align}\label{def: singular1}
\begin{split}
    S_k(y,z) = \int_{\mathcal{U}_k} &(\sigma^{(1)}-\sigma^{(2)})(x) \nabla_x G_1(x,y)\cdot \nabla_x G_2(x,z) \diff x + \\
    &+\int_{\mathcal{U}_k} (q^{(2)}-q^{(1)})(x) G_1(x,y) G_2(x,z) \diff x,
    \end{split}
\end{align}
where $G_j$ are the weak solutions to \eqref{eqn: green system}.
Moreover, the following partial derivatives are well defined:
\begin{align}\label{def: singular2}
    \p_{y_i} \p_{z_j} S_k(y,z) = \int_{\mathcal{U}_k} &(\sigma^{(1)}-\sigma^{(2)})(x) \p_{y_i}\nabla_x G_1(x,y)\cdot \p_{z_j}\nabla_x G_2(x,z) \diff x +\notag \\
    &+ \int_{\mathcal{U}_k} (q^{(2)}-q^{(1)})(x) \p_{y_i}G_1(x,y) \p_{z_j}G_2(x,z) \diff x,
\end{align}
and
\begin{align}
    \p^2_{y_i y_j} \p^2_{z_i z_j} S_k(y,z) = \int_{\mathcal{U}_k} &(\sigma^{(1)}-\sigma^{(2)})(x) \p^2_{y_i y_j}\nabla_x G_1(x,y)\cdot \p^2_{z_i z_j}\nabla_x G_2(x,z) \diff x +\notag \\
    &+ \int_{\mathcal{U}_k} (q^{(2)}-q^{(1)})(x) \p^2_{y_i y_j}G_1(x,y) \p^2_{z_i z_j}G_2(x,z) \diff x,
\end{align}
for $i,j=1,\dots,n$. For any $y,z\in \mathcal{W}_k$, one can show that $S_k(\cdot,z), S_k(y,\cdot) \in H^1_{loc}(\mathcal{W}_k)$ and are weak solutions, respectively, to
\begin{align*}
    \mbox{div}_y(\sigma^{(1)}\nabla_y S_k(\cdot,z)) + q^{(1)} S_k(\cdot,z) = 0 \qquad \mbox{in }\mathcal{W}_k,\\
    \mbox{div}_z(\sigma^{(2)}\nabla_z S_k(y,\cdot)) + q^{(2)} S_k(y,\cdot) = 0 \qquad \mbox{in }\mathcal{W}_k,
\end{align*}
(see \cite[Proposition 3.3]{Alessandrini2005}). Set
\begin{equation*}
E := \max\{\|\gamma^{(1)}-\gamma^{(2)}\|_{L^{\infty}(\Omega)}, \|q^{(1)}-q^{(2)}\|_{L^{\infty}(\Omega)}\}.
\end{equation*}
Notice that for any $y,z\in\mathcal{W}_k$,
\[
|S_k(y,z)|\leq C E (d(y)d(z))^{1-\frac{n}{2}},
\]
where $d(y)$ is the distance of $y$ from $\mathcal{U}_k$ and $C$ is a positive constant that depend on $\bar{A}$ and the a priori data.

The following Proposition introduces the quantitative estimates of unique continuation for the singular integrals.

\begin{proposition}\label{proposizione unique continuation finale}
	Suppose that for some positive $\varepsilon_0$ we have
	\begin{equation}\label{estim0}
		\left|S_k(y,z)\right|\leq
		r_0^{2-n}\varepsilon_0,\quad \mbox{for every}\: (y,z)\in
		D_0\times D_0.
	\end{equation}
Then there exist $\bar{r}>0$, $C>0$ constants that depend on the a priori data only such that the following inequalities hold true for every $r\in (0,\bar{r}\slash 8)$:
\begin{eqnarray}
    \left|S_k\left(y_{k+1},y_{k+1})\right)\right|&\leq&C r^{-2\tilde\gamma}\left( \frac{\ep_0}{\ep_0+E} \right)^{\tau_r^2 \beta^{2N_1}} (\ep_0 +E),\label{estim1}\\
    \left|\partial_{y_j}\partial_{z_i}S_{k}\left(y_{k+1},y_{k+1}\right)\right| &\leq& C r^{-2\tilde\gamma-2}\left( \frac{\ep_0}{\ep_0+E} \right)^{\tau_r^2 \beta^{2N_1}} (\ep_0 +E),\label{estim2}\\
    |\p^2_{y_iy_j} \p^2_{z_i z_j} S_k(y_{k+1},y_{k+1})| &\leq& C r^{-2\tilde\gamma-4}\left( \frac{\ep_0}{\ep_0 +E} \right)^{\tau_r^2 \beta^{2N_1}} (\ep_0 +E),\label{estim3}
 \end{eqnarray}
	for any $i,j=1,\dots , n$,
	$y_{k+1}=P_{k+1}- r\nu(P_{k+1})$, 
	$\nu(P_{k+1})$ is the exterior unit normal to $\partial{D}_{k}$  at the point $P_{k+1}$, $\tilde \gamma = \frac{n}{2}-1$, $0<\beta<1$, $N_1\in\N$ and
    \[
    \tau_r = \ln\left(\frac{12r_1-2r}{12r_1-3r}\right)\slash \ln\left(\frac{6r_1-r}{2r_1} \right).
    \]
\end{proposition}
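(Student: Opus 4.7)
The plan is to propagate smallness in each of the two variables separately, since both $y\mapsto S_k(y,z)$ and $z\mapsto S_k(y,z)$ are weak solutions in $\mathcal{W}_k$ of elliptic equations in divergence form with $C^{1,1}$ leading coefficient inside each $D_m$ and piecewise Lipschitz jumps across the $C^2$ interfaces $\Sigma_1,\dots,\Sigma_k$. This is precisely the class to which the three spheres inequality of Carstea and Wang \cite{Carstea2020} applies. The companion ingredient is the uniform bound $|S_k(y,z)|\le CE(d(y)d(z))^{1-n/2}$ already recorded before the statement: it plays the role of the global $L^\infty$ a priori bound entering any three spheres estimate, and it is the source of the blow-up factor $r^{-2\tilde\gamma}$ with $\tilde\gamma=\tfrac{n}{2}-1$.

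First I would fix $z\in D_0$ and regard $y\mapsto S_k(y,z)$ as a solution in $\mathcal{W}_k\setminus\{z\}$. I build a chain of balls $B(x_0,r_1),\dots,B(x_{N_1},r_1)$ with $x_0$ deep inside $D_0$, $x_{N_1}=y_{k+1}$, consecutive centers at distance at most $r_1\sim r_0$, and $N_1$ controlled by the a priori data. Applying the three spheres inequality at each intermediate step produces one smallness factor $\beta\in(0,1)$ per step, where $\beta$ depends only on the fixed ratio of radii. The terminal step ends at the ball $B(y_{k+1},r)$ abutting $\Sigma_{k+1}$ and uses the radii $r,\,6r_1-r,\,12r_1-2r$; the resulting exponent is exactly
\[
\tau_r=\ln\!\left(\frac{12r_1-2r}{12r_1-3r}\right)\bigg/\ln\!\left(\frac{6r_1-r}{2r_1}\right).
\]
Iterating, I obtain $|S_k(y_{k+1},z)|\le C r^{-\tilde\gamma}(E+\ep_0)^{1-\tau_r\beta^{N_1}}\ep_0^{\tau_r\beta^{N_1}}$ for every $z\in D_0$. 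Next, with $y=y_{k+1}$ frozen, I repeat the same argument in the $z$ variable along an analogous chain: the bound just obtained is the new smallness hypothesis on $D_0$, and another $N_1$ applications of three spheres compose the two exponents into $\tau_r^{2}\beta^{2N_1}$ and the two blow-up factors into $r^{-2\tilde\gamma}$, yielding \eqref{estim1}.

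For the derivative estimates \eqref{estim2} and \eqref{estim3} I would exploit interior regularity for the elliptic equations satisfied by $S_k(\cdot,z)$ and $S_k(y,\cdot)$. Since $y_{k+1}$ lies at distance $r$ from the nearest interface, the coefficients are smooth on $B(y_{k+1},r/2)$, so the usual Caccioppoli and Schauder estimates control the $\ell$-th derivative of $S_k$ at $y_{k+1}$ by $r^{-\ell}$ times the $L^\infty$ norm on a slightly larger concentric ball. Applying \eqref{estim1} on that ball and losing one factor $r^{-1}$ per derivative yields the extra $r^{-2}$ in \eqref{estim2} (one derivative in each variable) and $r^{-4}$ in \eqref{estim3} (two derivatives in each variable).

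The main obstacle is the careful orchestration of the chain across the interfaces $\Sigma_m$ together with the controlled deterioration of the exponent at the terminal step. Each time the chain crosses $\Sigma_m$ one must appeal to the Carstea--Wang version of the three spheres inequality, whose hypotheses ($C^2$ interface, piecewise Lipschitz coefficient) are tailored to assumptions (C1)--(C5) and (D1)--(D3); alternatively, one flattens the interface by a local diffeomorphism that preserves the divergence structure up to a bounded perturbation, which is possible precisely because of the $C^2$ regularity imposed in (D1). Keeping track of the explicit dependence on $r$ so that the terminal exponent retains the logarithmic form $\tau_r$, instead of collapsing into a weaker modulus of continuity, is the delicate technical point; once this is achieved, the geometric iteration follows the scheme of \cite{Alessandrini2005,Alessandrini2018} adapted to the Robin Green function of \eqref{eqn: green system}.
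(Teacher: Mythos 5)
Your proposal follows essentially the same route as the paper: first propagate smallness along a chain of balls via the Carstea--Wang three spheres inequality (converted to $L^\infty$ form) in the $y$ variable with $z\in D_0$ fixed, then repeat in $z$ with $y=y_{k+1}$ frozen so that the exponents compose to $\tau_r^2\beta^{2N_1}$ and the blow-up factors to $r^{-2\tilde\gamma}$, and finally obtain \eqref{estim2}--\eqref{estim3} from \eqref{estim1} by interior Schauder estimates at distance comparable to $r$ from $\Sigma_{k+1}$, losing $r^{-1}$ per derivative. This matches the paper's proof (which phrases the derivative step as Schauder estimates for the elliptic equation satisfied by $S_k$ on the product domain $D_k\times D_k$), so the argument is correct and not genuinely different.
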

\begin{remark}
    Notice that since
\begin{equation}\label{eqn: taurestimate}
\frac{\tau_r}{r} \geq \frac{1}{12 r_1 \ln 3},
\end{equation}
one can replace $\tau_r$ with $r$ in Proposition \ref{proposizione unique continuation finale}.
\end{remark}
For any $\eta>0$, let $\omega_{\eta}(t)$ be the non-decreasing function defined on $[0,+\infty)$ as
\begin{equation}
\omega_{\eta}(t)=
    \begin{cases}
        2^{\eta} e^{-2} |\ln t|^{-\eta}, &t\in (0,e^{-2}),\\
        e^{-2}, &t\in [e^{-2},+\infty).
    \end{cases}
\end{equation}
Recall that  
\[
[0,+\infty)\ni t \rightarrow t\omega_{\eta}\left(\frac{1}{t}\right)\in [0,+\infty)\quad \text{is a non-decreasing function}
\]
and for any $\beta\in(0,1)$,
\[
\omega_{\eta}\left(\frac{t}{\beta}\right)\leq |\ln e\beta^{-\frac{1}{2}}|^{\eta} \omega_{\eta}(t), \qquad \omega_{\eta}(t^{\beta}) \leq \left(\frac{1}{\beta} \right)^{\eta} \omega_{\eta}(t). 
\]
We set $\omega^{(0)}_{\eta}=t^{\eta}$ for $0<\eta<1$ and we denote the iterated composition of $\omega$ with itself as
\[
\omega^{(1)}_{\eta} = \omega_{\eta}, \quad \omega^{(j)}_{\eta} = \omega_{\eta}\circ \omega^{(j-1)}_{\eta}\quad \text{for }j=2,3,\dots
\]

Before proving Theorem \ref{stability}, we recall some useful formulas. Let $u_i\in H^{1}(\Omega)$ for $i=1,2$ be two weak solutions to
\[
\mbox{div}(\sigma^{(i)} \nabla u_i) + q^{(i)} u_i = 0\quad \mbox{in }\Omega,
\]
with $u_i|_{\p\Omega}\in H^{\frac{1}{2}}_{00}(\Sigma)$. By the weak formulation for $i=1,2$, one derives
\begin{align}\label{eqn: 1}
    \int_{\Omega} [(&\sigma^{(1)}-\sigma^{(2)})(x) \nabla u_1\cdot \nabla u_2 + (q^{(2)}-q^{(1)})(x) u_1(x) u_2(x)] \diff x =\notag\\
    &= \int_{\Sigma} [\sigma^{(2)}\nabla \bar{u}_2\cdot \nu\, \bar{u}_1 - \sigma^{(1)} \nabla u_1\cdot\nu\, u_2 ] \diff S(x).
\end{align}
For $v_i\in H^1(\Omega)$ a different solution to div$(\sigma^{(i)}\nabla v_i) + q^{(i)} v_i=0$ in $\Omega$ with $v_i\in H^{\frac{1}{2}}_{00}(\Sigma)$, one derives the following identity:
\begin{equation}\label{eqn: 2}
    \int_{\Sigma} [\sigma^{(i)}\nabla v_i\cdot\nu\, u_i - \sigma^{(i)}\nabla \bar{u}_i\cdot\nu\, \bar{v}_i] \diff S(x) = 0.
\end{equation}
By setting $i=2$ in \eqref{eqn: 2} and by summing up \eqref{eqn: 1} with \eqref{eqn: 2} one derives
\begin{align}\label{eqn: equality Cauchy}
    \int_{\Omega} &[(\sigma^{(1)}-\sigma^{(2)})(x)\nabla u_1(x)\cdot\nabla u_2(x) + (q^{(2)}-q^{(1)})(x) u_1(x) u_2(x)] \diff x =\notag\\
    &= \la \sigma^{(2)}\nabla \bar{u}_2\cdot \nu, (u_1-v_2)\ra - \la \sigma^{(1)}\nabla u_1\cdot\nu - \sigma^{(2)}\nabla v_2\cdot \nu, \bar{u}_2\ra.
\end{align}
If one takes the modulus in \eqref{eqn: equality Cauchy}, one obtains the following inequality:
\begin{align}\label{eqn: inequality Cauchy}
    \Big|\int_{\Omega} &[(\sigma^{(1)}-\sigma^{(2)})(x)\nabla u_1(x)\cdot\nabla u_2(x) + (q^{(2)}-q^{(1)})(x) u_1(x) u_2(x)] \diff x \Big|\leq\notag\\
    &\leq d(\CC_1,\CC_2)\,\|(u_1, \sigma^{(1)}\nabla u_1\cdot \nu)\|_{\HH}\, \|(\bar{u}_2, \sigma^{(2)}\nabla \bar{u}_2\cdot \nu)\|_{\HH}.
\end{align}

\section{Proof of Theorem \ref{stability}}\label{sec: 4}

\begin{proof}[Proof of Theorem \ref{stability}]
Let $\{\sigma^{(i)},q^{(i)}\}$ for $i=1,2$ be two pairs of coefficients that satisfy the assumptions of Section \ref{sec: coefficient} and let $\mathcal{C}_1, \mathcal{C}_2$ be the corresponding local Cauchy data. Due to the nature of the leading order coefficient, by \eqref{Abar}, the following inequality 
\[
\|\sigma^{(1)}-\sigma^{(2)}\|_{L^{\infty}(\Omega)} \leq C d(\mathcal{C}_1, \mathcal{C}_2)
\]
is equivalent to
\[
\|\gamma^{(1)}-\gamma^{(2)}\|_{L^{\infty}(\Omega)} \leq C d(\mathcal{C}_1,\mathcal{C}_2),
\]
where $C>1$ is a constant that depends on the a priori data only. 

For $K\in \{1,\dots,N\}$, let $D_K$ be the subdomain of the known partition of $\Omega$ such that
\begin{align*}
\|\gamma^{(1)}-\gamma^{(2)}\|_{L^{\infty}(\Omega)} %= \max\limits_{m=1,\dots,N} \|\gamma_m^{(1)}-\gamma_m^{(2)}\|_{L^{\infty}(D_m)} 
=\|\gamma_K^{(1)}-\gamma_K^{(2)}\|_{L^{\infty}(D_K)}.
\end{align*}
Similarly, for $\tilde K\in \{1,\dots,N\}$, let $D_{\KK}$ be such that
\begin{align*}
\|q^{(1)}-q^{(2)}\|_{L^{\infty}(\Omega)} %= \max\limits_{m=1,\dots,N} \|\gamma_m^{(1)}-\gamma_m^{(2)}\|_{L^{\infty}(D_m)} 
=\|q_{\KK}^{(1)}-q_{\KK}^{(2)}\|_{L^{\infty}(D_{\KK})}.
\end{align*}
Our goal is to prove that
\[
\|q_{\KK}^{(1)}-q_{\KK}^{(2)}\|_{L^{\infty}(D_{\KK})}+\|\gamma_K^{(1)}-\gamma_K^{(2)}\|_{L^{\infty}(D_K)} \leq C d(\mathcal{C}_1,\mathcal{C}_2).
\]
%For simplicity, assume that $K=\KK$. 
Let $\Omega_0$ be the augmented domain and let $\sigma^{(i)}$ and $q^{(i)}$ for $i=1,2$ be the extended coefficient on $D_0$, with $\sigma^{(i)}|_{D_0}=Id_n$ and $q^{(i)}=1$. Let $D_0,D_1,\dots ,D_K$ be the chain of contiguous domains such that $\Sigma_m=\p D_m\cap \p D_{m+1}$ and $\Sigma_1=\p D_0\cap \p D_1$. Set
\begin{align*}
\ep &=d(\mathcal{C}_1,\mathcal{C}_2),\\
E &=\max\{\|\gamma_K^{(1)}-\gamma_K^{(2)}\|_{L^{\infty}(D_K)}, \|q_{\tilde K}^{(1)}-q_{\tilde K}^{(2)}\|_{L^{\infty}(D_{\tilde K})}\},\\
\delta_k &=\|\gamma^{(1)}-\gamma^{(2)}\|_{L^{\infty}(\mathcal{W}_k)},\\
\tilde \delta_k &=\|q^{(1)}-q^{(2)}\|_{L^{\infty}(\mathcal{W}_k)},\\
\delta^*_k &= \max\{\delta_k, \tilde\delta_k\}\qquad \mbox{for }k=1,\dots, \max\{K,\tilde K\}.
\end{align*}

Let $\{x_1,\dots,x_n\}$ be a coordinate system with origin at $P_k$. Let $\Sigma_k$ be the flat interface of Section \ref{sec: domain}. We assume that it is contained in the tangential hypersurface of $\p D_1 \cap B_{\frac{r_0}{4}}$ at $P_k$. For any scalar function $f$, we denote with $D_T f(x)$ the tangential field of $f$ at $x$ and with $\p_{\nu} f(x)$ the derivative in the normal direction at $x$. The affine function $(\gamma^{(1)}_k-\gamma^{(2)}_k)$ can be bounded from above in $D_k$ in terms of the quantities
\begin{equation}
\|\gamma^{(1)}_k - \gamma^{(2)}_k\|_{L^{\infty}(\Sigma_k\cap B_{\frac{r_0}{4}}(P_k))}\quad\mbox{and}\quad | \partial_{\nu}(\gamma^{(1)}_k - \gamma^{(2)}_k)(P_k)|.
\end{equation}
Indeed, set
\[
A_k + B_k\cdot x = (\gamma^{(1)}_k -\gamma^{(2)}_k)(x), \qquad A_k\in \R,\,B_k\in \R^n,\,x\in D_k.
\] 
Fix an orthonormal basis $\{e_j\}_{j=1}^{n-1}$ of $\Sigma_k$ and let $e_n$ be the direction of the normal. One can evaluate $(\gamma^{(1)}_k - \gamma^{(2)}_k)$ at the points $P_k$ and $P_k + \frac{r_0}{6}e_j$ for $j=1,\dots,n$ and derive
\[
|A_k + B_k\cdot P_k| + \frac{r_0}{6} \sum_{j=1}^{n-1} |(B_k)_j| \leq C \|\gamma^{(1)}_k - \gamma^{(2)}_k\|_{L^{\infty}(\Sigma_k\cap B_{\frac{r_0}{4}}(P_k))}
\]
and
\[
|B_k\cdot e_n| = |\p_{\nu}(\gamma^{(1)}_k-\gamma^{(2)}_k)(P_k)|.
\]
Hence, it turns out that
\[
\|\gamma^{(1)}_k - \gamma^{(2)}_k\|_{L^{\infty}(D_k)} \leq C\left( \|\gamma^{(1)}_k - \gamma^{(2)}_k\|_{L^{\infty}(\Sigma_k\cap B_{\frac{r_0}{4}}(P_k))} + |\p_{\nu}(\gamma^{(1)}_k -\gamma^{(2)}_k)(P_k)|\right),
\]
for $C>0$ constant that depends on the a priori data. 
\bigskip

Our goal is to estimate $\delta_k^*$ for any $k=1,\dots, \max\{K,\tilde K\}$.

When $k=1,$ we obtain the following H\"older estimates at the boundary:
\begin{align}
    \delta_1 &\leq C (E+\ep) \Big(\frac{\ep}{\ep + E} \Big)^{\eta_1}\\
    \tilde \delta_1 &\leq C (E+\ep) \Big(\frac{\ep}{\ep + E} \Big)^{\tilde \eta_1},
\end{align}
with $0<\eta_1,\tilde \eta_1 <1$ that depend on $\theta_1, \theta_2, \theta_3$ and $C$ are positive  constants that depend on the a priori data only (see the Appendix for a proof).

We proceed by estimating $\delta^*_2$. We claim that the following inequalities hold:
\begin{align}
    \delta_2 &\leq C (\ep + E) \omega^{(2)}_{\eta_2}\Big(\frac{\ep}{\ep+E} \Big),\\
    \tilde \delta_2  &\leq C (\ep + E) \omega^{(3)}_{\tilde \eta_2}\Big(\frac{\ep}{\ep+E} \Big),
\end{align}
with $0<\eta_2, \tilde \eta_2<1$.

Our idea is to first estimate the $L^2$ norm of $(\gamma^{(1)}-\gamma^{(2)})$ on $\mathcal{W}_1$, namely $\delta_2$, by means of $\delta_1^*$ and then to estimate the $L^2$ norm of $(q^{(2)}-q^{(1)})$ on $\mathcal{W}_1$, namely $\tilde \delta_2$, in terms of $\delta_2$.
\bigskip

 For any $y,z\in D_0$, the following identities hold:
\begin{equation}\label{eqn: green identity1}
    \begin{split}
        &\int_{\Sigma} [\sigma^{(2)}(x)\nabla_x G_2(x,z)\cdot \nu\, G_1(x,y) - \sigma^{(1)}(x) \nabla_x G_1(x,y)\cdot\nu\, G_2(x,z) ] \diff S(x) =\\
        &=S_1(y,z) + \int_{\mathcal{W}_1} [(\sigma^{(1)}-\sigma^{(2)})(x) \nabla_x G_1(x,y)\cdot \nabla_x G_2(x,z) + (q^{(2)}-q^{(1)})(x) G_1(x,y) G_2(x,z)] \diff x,
    \end{split}
\end{equation}
and
\begin{equation}\label{eqn: 1der}
    \begin{split}
        \int_{\Sigma} [&\sigma^{(2)}(x)\nabla_x \p_{z_n} G_2(x,z)\cdot \nu\, \p_{y_n} G_1(x,y) - \sigma^{(1)}(x) \nabla_x \p_{y_n} G_1(x,y)\cdot\nu\, \p_{z_n} G_2(x,z) ] \diff S(x) =\\
        &=\p_{y_n} \p_{z_n} S_1(y,z) + \int_{\mathcal{W}_1} (\sigma^{(1)}-\sigma^{(2)})(x) \nabla_x \p_{y_n} G_1(x,y)\cdot \nabla_x \p_{z_n} G_2(x,z) \diff x + \\
        &+\int_{\mathcal{W}_1} (q^{(2)}-q^{(1)})(x) \p_{y_n} G_1(x,y) \p_{z_n} G_2(x,z) \diff x.
    \end{split}
\end{equation}
By \eqref{eqn: inequality Cauchy}, 
 \begin{align}\label{eqn: bound S0}
 \begin{split}
\Big|\int_{\Sigma} [&\sigma^{(2)}\nabla_x G_2(x,z)\cdot \nu\, G_1(x,y) - \sigma^{(1)} \nabla_x G_1(x,y)\cdot\nu\, G_2(x,z) ] \diff S(x)\Big| \leq\\
&\leq C \ep\, (d(y) d(z))^{1-\frac{n}{2}},
\end{split}
\end{align}
where $d(y)$ denotes the distance between $y$ and $\Omega$. 
Let $\rho=r_0\slash 4$, let $r\in(0,\bar{r}\slash 8)$ and set $w=P_2 + r\nu(P_2)$, where $\nu(P_2)$ is the outward unit normal of $\p D_2$ at $P_2$. Consider
\begin{equation}
    S_1(w,w) = I_1(w)+I_2(w),
\end{equation}
with
\begin{equation*}
    \begin{split}
        I_1(w) &= \int_{B_{\rho}(P_2)\cap D_2}(\gamma_2^{(1)}-\gamma_2^{(2)})(x)\:A(x)\,\nabla_x G_1(x,w)\cdot\nabla_x G_2(x,w) \diff x +\\
         &+\int_{B_{\rho}(P_2)\cap D_2}(q_2^{(2)}-q_2^{(1)})(x)\, G_1(x,w)\cdot G_2(x,w) \diff x,
    \end{split}
\end{equation*}
and
\begin{equation*}
    \begin{split}
        I_2(w) &= \int_{\mathcal{U}_2 \setminus (B_{\rho}(P_2)\cap D_2)}(\sigma^{(1)}-\sigma^{(2)})(x)\,\nabla_x G_1(x,w)\cdot\nabla_x G_2(x,w) \diff x +\\
         &+\int_{\mathcal{U}_2 \setminus (B_{\rho}(P_2)\cap D_2)}(q^{(2)}-q^{(1)})(x)\, G_1(x,w)\cdot G_2(x,w) \diff x.
    \end{split}
\end{equation*}
The volume integrals of $I_2(w)$ can be bounded from above via Caccioppoli inequality (see also \cite[Proposition 3.1]{Alessandrini2005}):
\begin{equation}\label{11 stima I2}
    |I_2(w)|\leq C E \rho^{2-n}.
\end{equation}
	
Regarding $I_1(w)$, notice that there exists $x^*\in \overline{\Sigma_2\cap B_{r_0\slash 4}(P_2)}$ such that
\begin{equation}\label{expansion1}
    (\gamma^{(1)}_2 - \gamma^{(2)}_2)(x^*) = \|\gamma^{(1)}_2 -\gamma^{(2)}_2\|_{L^{\infty}(\Sigma_2 \cap B_{r_0\slash 4}(P_2))}.
\end{equation}
By \eqref{expansion1},
\begin{equation*}
    \begin{split}
        I_1(w) &= \int_{B_{\rho}(P_2)\cap D_2}(\gamma_2^{(1)}-\gamma_2^{(2)})(x^*)\:A(x)\,\nabla_x G_1(x,w)\cdot\nabla_x G_2(x,w) \diff x +\\
        &+ \int_{B_{\rho}(P_2)\cap D_2} B_2 \cdot(x-x^*)\:A(x)\,\nabla_x G_1(x,w)\cdot\nabla_x G_2(x,w) \diff x +\\
         &+\int_{B_{\rho}(P_2)\cap D_2}(q_2^{(2)}-q_2^{(1)})(x)\, G_1(x,w)\cdot G_2(x,w) \diff x.
    \end{split}
\end{equation*}
By the asymptotic estimate \eqref{eqn: asymptotic1}, one obtains
\begin{align*}
    I_1(w) \geq &\|\gamma^{(1)}_2 - \gamma^{(2)}_2\|_{L^{\infty}(\Sigma_2\cap B_{\frac{r_0}{4}}(P_2))} \Big\{ \int_{B_{\rho}(P_2)\cap D_2} A(x) \nabla_x H_1(x,w)\cdot \nabla_x H_2(x,w) \diff x -\\
    &-\int_{B_{\rho}(P_2)\cap D_2} |x-w|^{2(1-n)+\theta_1} \diff x - \int_{B_{\rho}(P_2)\cap D_2} |x-w|^{2(1-n +\theta_1)} \diff x\Big\} -\\
    &- C E \int_{B_{\rho}(P_2)\cap D_2} |x| |x-w|^{2(1-n)}\, \diff x - C E \int_{B_{\rho}(P_2)\cap D_2} |x-w|^{2(2-n)} \diff x.
\end{align*}
It turns out that
\begin{equation}\label{eqn: stabinterm11}
    |I_1(w)| \geq C \|\gamma^{(1)}_2 -\gamma^{(2)}_2\|_{L^{\infty}(\Sigma_2\cap B_{\frac{r_0}{4}}(P_2))} r^{2-n} - C E r^{2-n+\theta_1} - C E r^{3-n}.
\end{equation}
Notice that for any $y,z\in (D_0)_{r_0\slash 3},$
\[
|S_1(y,z)|\leq C r_0^{2-n} (\ep + \delta_1^*),
\]
hence, by \eqref{estim1}, 
\begin{equation}\label{eqn: intermstabs1}
    |S_1(y,z)| \leq C (\ep + \delta_1^* + E) \Big(\frac{\ep + \delta_1^*}{\ep + \delta_1^* + E} \Big)^{\beta^{2N_1}\tau_r^2} r^{2-n}.
\end{equation}
Since
\[
|I_1(w)|\leq |S_1(w,w)| + |I_2(w)|,
\]
if we rearrange the inequalities \eqref{eqn: stabinterm11} and \eqref{11 stima I2} together with \eqref{eqn: intermstabs1} and \eqref{eqn: taurestimate}, we derive 
\begin{equation}\label{eqn: intermstab21}
    \begin{split}
    \|\gamma^{(1)}_2 -\gamma^{(2)}_2&\|_{L^{\infty}(\Sigma_2\cap B_{\frac{r_0}{4}}(P_2))} r^{2-n} \le C (\ep + \delta^*_1 + E) \Big\{\Big(\frac{\ep + \delta^*_1}{\ep + \delta^*_1 + E} \Big)^{\beta^{2N_1}(12r_1\ln3)^{-2}r^2} r^{2-n} + r^{\theta_1} \Big\}.
    \end{split}
\end{equation}
If we multiply \eqref{eqn: intermstab21}, and if we choose
\[
r = \Big|\ln \Big(\frac{\ep + \delta^*_1}{\ep + \delta^*_1 + E} \Big) \Big|^{-\frac{1}{2+\theta_1}}
\]
(see also \cite{Beretta2013}), it turns out that
\begin{equation}
    \|\gamma^{(1)}_2-\gamma^{(2)}_2\|_{L^{\infty}(\Sigma_2\cap B_{\frac{r_0}{4}}(P_2))} \leq C (\ep+\delta_1^* + E) \Big|\ln \Big(\frac{\ep + \delta^*_1}{\ep + \delta^*_1 + E} \Big) \Big|^{-\frac{\theta_1}{2+\theta_1}}.
\end{equation}
By the properties of $\omega_b$, one derives
\begin{equation}\label{eqn: surfacel2}
    \|\gamma^{(1)}_2-\gamma^{(2)}_2\|_{L^{\infty}(\Sigma_2\cap B_{\frac{r_0}{4}}(P_2))} \leq C (\ep + E) \omega_{b} \Big( \frac{\ep}{\ep + E}\Big),
\end{equation}
with $0<b<1$ depending on $\theta_1$.

A similar estimate can be derived for $\p_{\nu}(\gamma^{(1)}_2-\gamma^{(2)}_2)$. From Taylor's formula, one derives
\begin{align*}
(\gamma^{(1)}_2-\gamma^{(2)}_2)(x) &= (\gamma^{(1)}_2-\gamma^{(2)}_2)(P_2) + (D_T(\gamma^{(1)}_2-\gamma^{(2)}_2)(P_2))\cdot (x-P_2)' +\\ &+(\p_{\nu}(\gamma^{(1)}_2 -\gamma^{(2)}_2)(P_2))\cdot (x-P_2)_n.
\end{align*}
Hence,
\begin{align*}
    |\p_{y_n} \p_{z_n} S_1(w,w)| &\geq  \Big|\int_{B_{\rho}(P_2)\cap D_2} \p_{\nu}(\gamma^{(1)}_2 - \gamma^{(2)}_2)(P_2)\cdot (x-P_2)_n A(x) \nabla_x \p_{y_n} G_1(x,w)\cdot \nabla_x \p_{z_n} G_2(x,w) \diff x \Big|-\\
    &- \Big|\int_{B_{\rho}(P_2)\cap D_2} D_T(\gamma^{(1)}_2 - \gamma^{(2)}_2)(P_2)\cdot (x-P_2)' A(x) \nabla_x \p_{y_n} G_1(x,w)\cdot \nabla_x \p_{z_n} G_2(x,w) \diff x \Big|-\\
    &- \Big|\int_{B_{\rho}(P_2)\cap D_2} (\gamma^{(1)}_2 - \gamma^{(2)}_2)(P_2) A(x) \nabla_x \p_{y_n} G_1(x,w)\cdot \nabla_x \p_{z_n} G_2(x,w) \diff x \Big|-\\
    &- \Big|\int_{B_{\rho}(P_2)\cap D_2} (q^{(2)}_2 - q^{(1)}_2)(x) \p_{y_n} G_1(x,w)\cdot \p_{z_n} G_2(x,w) \diff x \Big|\\
    &- \Big|\int_{\mathcal{U}_2 \setminus (B_{\rho}(P_2)\cap D_2)} (\sigma^{(1)}-\sigma^{(2)})(x) \p_{y_n} \nabla_x G_1(x,w)\cdot \p_{z_n} \nabla_x G_2(x,w) \diff x \Big|-\\
    &- \Big|\int_{\mathcal{U}_2 \setminus (B_{\rho}(P_2)\cap D_2)} (q^{(1)}-q^{(2)})(x) \p_{y_n} G_1(x,w)\cdot \p_{z_n} G_2(x,w) \diff x\Big|\\
    &= I_{11} - I_{12} - I_{13} - I_{14} - I_{15} - I_{16}.
\end{align*}
To estimate $I_{11}$ from below, we add and subtract the fundamental solution $H_i$, $i=1,2$, and by \eqref{eqn: asymptotic2}, one derives
\begin{equation}
    I_{11}\geq C |\p_{\nu}(\gamma^{(1)}_2 - \gamma^{(2)}_2)(P_2)| r^{1-n} - C E r^{1-n+\theta_2}.
\end{equation}
To estimate the terms $I_{12}$ and $I_{13}$, notice that by \eqref{eqn: surfacel2}
\[
|(\gamma^{(1)}_2 - \gamma^{(2)}_2)(P_2)| + C |D_T(\gamma^{(1)}_2 - \gamma^{(2)}_2)(P_2)| \leq C \|\gamma^{(1)}_2 - \gamma^{(2)}_2\|_{L^{\infty}(\Sigma_2 \cap B_{\frac{r_0}{4}})} \leq C (\ep + E) \omega_{b} \Big( \frac{\ep}{\ep + E}\Big).
\]
Regarding the integral $I_{14}$, one bounds it from above as
\begin{align*}
    I_{14} &\leq \|q^{(2)}_2-q^{(1)}_2\|_{L^{\infty}(D_2)} \int_{D_2\cap B_{\rho}} |\p_{y_n} G_1(x,w)| |\p_{z_n} G_2(x,w)| \diff x\\
    &\leq C \int_{D_2\cap B_{\rho}} |x-w|^{2(1-n)} \leq C\,r^{2-n}.
\end{align*}
The integral $I_{15}$ and $I_{16}$ can be bounded by means of \cite[Proposition 3.1]{Alessandrini2005} as
\[
I_{15}, I_{16} \leq C E \rho^{-n}.
\]
To sum up, we have 
\begin{equation}
|\p_{\nu}(\gamma^{(1)}_2 - \gamma^{(2)}_2)(P_2)| r^{1-n} \leq |\p_{y_n} \p_{z_n} S_1(w,w)| + C \{ Er^{1-n+\theta_2} + C (\ep + E) \omega_{b} \Big( \frac{\ep}{\ep + E}\Big) r^{-n} \}.
\end{equation}
Notice that for $y,z\in (D_0)_{r_0\slash 3}$, 
\[
|\p_{y_n}\p_{z_n} S_1(y,z)| \leq C (\ep + \delta_1^*) r^{-n},
\]
then by \eqref{estim2} and \eqref{eqn: taurestimate},
\begin{equation}\label{eqn: interms1der}
    \left|\partial_{y_j}\partial_{z_i}S_{1}(w,w)\right| \leq C (\ep+ \delta_1^* +E) \left( \frac{\ep+\delta_1^*}{\ep+ \delta_1^*+E} \right)^{\beta^{2N_1}(12r_1\ln3)^{-2}r^2} r^{-n}.
\end{equation}
Hence, one derives
\begin{equation}\label{eqn1: interm}
\begin{split}
|\p_{\nu}(\gamma^{(1)}_2 - \gamma^{(2)}_2)(P_1)| r^{1-n} \leq C \Big\{ Er^{1-n+\theta_2} +  &(\ep+ \delta_1^* +E) \left( \frac{\ep+\delta_1^*}{\ep+ \delta_1^*+E} \right)^{\beta^{2N_1}(12r_1\ln3)^{-2}r^2} r^{-n} + \\ &+ (\ep + E) \omega_{b} \Big( \frac{\ep}{\ep + E}\Big) r^{-n} \Big\}.
\end{split}
\end{equation}
Multiplying \eqref{eqn1: interm} by $r^{n-1}$ and optimizing w.r.t. $r$ leads to
\begin{equation}
    |\p_{\nu}(\gamma^{(1)}_1-\gamma^{(2)}_1)(P_1)| \leq C(\varepsilon + E) \omega^{(2)}_{\eta_2} \left(\frac{\varepsilon}{\varepsilon+ E}\right),
\end{equation}
with $0<\eta_2<1$. Hence, we conclude that
\begin{equation}\label{eqn: bound delta2}
    \delta_2 \leq C (\varepsilon + E) \omega^{(2)}_{\eta_2}\left(\frac{\varepsilon}{\varepsilon+E}\right).
\end{equation}
\bigskip

Our goal is to derive a bound for $\delta_2^*$. Notice that the norm ${\|q^{(2)}_2-q^{(1)}_2\|_{L^{\infty}(D_2)}}$ can be evaluated in terms of the following quantities:
\begin{equation}\label{eqn: upper q1 stab}
\|q^{(2)}_2-q^{(1)}_2\|_{L^{\infty}(\Sigma_2\cap B_{\frac{r_0}{4}}(P_2))} \quad \text{and}\quad |\p_{\nu}(q^{(2)}_2 - q^{(1)}_2)(P_2)|.
\end{equation}
 Let $\rho$, $r$, $w$ be as above. Consider
\[
\p_{y_n}\p_{z_n} S_1(w,w) = \p_{y_n}\p_{z_n} I_1(w) + \p_{y_n}\p_{z_n} I_2(w).
\]
We determine a lower bound for $\p_{y_n}\p_{z_n} I_1(w)$ in terms of $\|q^{(2)}_2-q^{(1)}_2\|_{L^{\infty}(\Sigma_2\cap B_{r_0\slash4}(P_2))}$. 
By the asymptotic estimate \eqref{eqn: asymptotic2} and \eqref{eqn: bound delta2}, one derives
\begin{align*}
    |\p_{y_n} \p_{z_n} &I_1(w)| \geq \|q^{(2)}_2-q^{(1)}_2\|_{L^{\infty}(\Sigma_2\cap B_{\frac{r_0}{4}}(P_2))}  \Big\{ \int_{B_{\rho}(P_2)\cap D_2} \p_{y_n} H_1(x,w)\,\p_{z_n} H_2(x,w) \diff x -\\
    &-\int_{B_{\rho}(P_2)\cap D_2} |x-w|^{2(1-n)+\theta_3} \diff x - \int_{B_{\rho}(P_2)\cap D_2} |x-w|^{2(1-n +\theta_3)} \diff x\Big\} -\\
    &- C E \int_{B_{\rho}(P_2)\cap D_2} |x| |x-w|^{2(1-n)}\, \diff x - C (\ep + E)\omega^{(2)}\Big(\frac{\ep}{\ep+E} \Big) \int_{B_{\rho}(P_2)\cap D_2} |x-w|^{-n} \diff x.
\end{align*}
It turns out that
\begin{align*}
C \|q^{(2)}_2-q^{(1)}_2\|_{L^{\infty}(\Sigma_2\cap B_{\frac{r_0}{4}}(P_2))} r^{2-n} \leq |\p_{y_n}\p_{z_n} I_1(w)| + C E r^{2-n+\theta_3} + C (\varepsilon + E) \omega^{(2)}_{\eta_2}\left(\frac{\varepsilon}{\varepsilon+E}\right) r^{-n}.
\end{align*}
By \eqref{eqn: interms1der}, due to the fact that
\[
|\p_{y_n}\p_{z_n} I_1(w)| \leq |\p_{y_n}\p_{z_n} S_1(w,w)| + |\p_{y_n}\p_{z_n} I_2(w)|,
\]
by the upper bound for $I_2(w)$, \eqref{estim2} and \eqref{eqn: taurestimate} we derive
\begin{align*}
\|q^{(2)}_2-q^{(1)}_2\|_{L^{\infty}(\Sigma_2\cap B_{\frac{r_0}{4}}(P_2))} r^{2-n} &\leq C\Big\{(\ep+ \delta_1^* +E) \left( \frac{\ep+\delta_1^*}{\ep+ \delta_1^*+E} \right)^{\beta^{2N_1}(12r_1\ln3)^{-2}r^2} r^{-n}+\\
&+ E r^{2-n+\theta_1} + (\varepsilon + E) \omega^{(2)}_{\eta_2}\left(\frac{\varepsilon}{\varepsilon+E}\right) r^{-n}\Big\}.
\end{align*}
Multiply by $r^{n-2}$ to obtain 
\begin{align*}
    \|q^{(2)}_2-q^{(1)}_2\|_{L^{\infty}(\Sigma_2\cap B_{\frac{r_0}{4}}(P_2))} &\leq C \Big\{(\ep+ \delta_1^* +E) \left( \frac{\ep+\delta_1^*}{\ep+ \delta_1^*+E} \right)^{\beta^{2N_1}(12r_1\ln3)^{-2}r^2} r^{-2}+\\
&+ E r^{\theta_1} + (\varepsilon + E) \omega^{(2)}_{\eta_2}\left(\frac{\varepsilon}{\varepsilon+E}\right) r^{-2}\Big\}.
\end{align*}
By optimizing with respect to $r$, one concludes that
\begin{equation}\label{eqn3: bound q1}
    \|q^{(2)}_2-q^{(1)}_2\|_{L^{\infty}(\Sigma_2\cap B_{\frac{r_0}{4}}(P_2))} \leq C (\varepsilon + E) \omega^{(3)}_{b_2}\left(\frac{\varepsilon}{\varepsilon+E}\right),
\end{equation}
with $0<b_2<1$ that depends on $\theta_1, \theta_2, \theta_3$.
\bigskip

To estimate $|\p_{\nu}(q^{(2)}_2 - q^{(1)}_2)(P_2)|$, consider the singular solution $\p^2_{y_i y_j}\p^2_{z_i z_j} S_1(w,w)$ and split it as the sum of the terms
\begin{align*}
I_1^{ij}(w) &= \int_{D_2\cap B_{\rho}(P_2)} (\sigma^{(2)}_2 - \sigma^{(1)}_2)(x) \nabla_x \p^2_{y_i y_j} G_1(x,w)\cdot \nabla_x \p^2_{z_i z_j} G_2(x,w) \diff x +\\
&+\int_{D_2\cap B_{\rho}(P_2)} (q^{(2)}_2 - q^{(1)}_2)(x) \p^2_{y_i y_j} G_1(x,w)\cdot \p^2_{z_i z_j} G_2(x,w) \diff x
\end{align*}
and
\begin{align*}
I_2^{ij}(w) &= \int_{\mathcal{U}_2 \setminus (D_2\cap B_{\rho}(P_2))} (\sigma^{(1)}-\sigma^{(2)})(x) \nabla_x \p^2_{y_i y_j} G_1(x,y_r)\cdot \nabla_x \p^2_{z_i z_j} G_2(x,y_r) \diff x +\\
&+ \int_{\mathcal{U}_2 \setminus (D_2\cap B_{\rho}(P_2))} (q^{(2)} - q^{(1)})(x) \p^2_{y_i y_j} G_1(x,w)\cdot \p^2_{z_i z_j} G_2(x,w) \diff x.
\end{align*}
Set $I_m(w)=\{I^{ij}_m(w)\}_{i,j=1,\dots,n}$ for $m=1,2$. Denote by $|I_m(w)|$ the Euclidean norm of the matrix $I_m(w)$. The upper bound for $|I_2(w)|$ is given by
\[
|I_2(w)| \leq C E \rho^{-(n+2)},
\]
where $C$ is a positive constant that depends on the a priori data only. For the lower bound for $I_1(w)$,  
\begin{align*}
    |I_1(w)| &\geq \frac{1}{n} \sum_{i,j=1}^n\Big\{\Big|\int_{D_2\cap B_{\rho}(P_2)} (\p_{\nu}(q^{(2)}_2 -q^{(1)}_2)(P_2))\cdot (x-P_2)_n \p^2_{y_i y_j} G_1(x,w)\cdot \p^2_{z_i z_j} G_2(x,w) \diff x\Big| - \\
    &- \Big|\int_{D_2\cap B_{\rho}(P_2)} (D_T(q^{(2)}_2 - q^{(1)}_2)(P_2))\cdot (x-P_2)' \p^2_{y_i y_j} G_1(x,w)\cdot \p^2_{z_i z_j} G_2(x,w) \diff x\Big|-\\
    &- \Big|\int_{D_2\cap B_{\rho}(P_2)} (q^{(2)}_2 - q^{(1)}_2)(P_2) \p^2_{y_i y_j} G_1(x,w)\cdot \p^2_{z_i z_j} G_2(x,w) \diff x\Big|\Big\}-\\
    &- \Big|\int_{D_2\cap B_{\rho}(P_2)} (\sigma^{(1)}-\sigma^{(2)})(x) \p^2_{y_i y_j}\nabla_x G_1(x,w)\cdot \p^2_{z_i z_j} \nabla_x G_2(x,w) \diff x \Big|.
\end{align*}
Since 
\[
|(q^{(2)}_2 - q^{(1)}_2)(P_2)|+C |(D_T(q^{(2)}_2-q^{(1)}_2)(P_2))| \leq C\|q^{(2)}_2-q^{(1)}_2\|_{L^{\infty}(\Sigma_2\cap B_{\frac{r_0}{4}}(P_2))},
\]
by \eqref{eqn3: bound q1} and \eqref{eqn: asymptotic3}, one derives
\begin{equation}\label{eqn3: interm 2det I1}
\begin{split}
|I_1(w)|\geq C |(\p_{\nu}(q^{(2)}_2-q^{(1)}_2)(P_2))| r^{1-n} - C (\varepsilon + E) \omega^{(3)}_{b_2}\left(\frac{\varepsilon}{\varepsilon+E}\right) r^{-n} -\\
- C E r^{1+\theta_2 -n} - C (E+\ep)\omega_{\eta_2}^{(2)}\Big(\frac{\ep}{\ep + E} \Big) r^{-2-n}.
\end{split}
\end{equation}
Since for $y,z\in (D_0)_{r_0\slash 3}$,
\begin{equation*}
    \begin{split}
        \int_{\Sigma} [&\sigma^{(2)}(x)\nabla_x \p^2_{z_n} G_2(x,z)\cdot \nu\, \p^2_{y_n} G_1(x,y) - \sigma^{(1)}(x) \nabla_x \p^2_{y_n} G_1(x,y)\cdot\nu\, \p^2_{z_n} G_2(x,z) ] \diff S(x) =\\
        &= \p^2_{y_n}\p^2_{z_n} S_1(y,z) + \int_{\mathcal{W}_1} (\sigma^{(1)}-\sigma^{(2)})(x) \nabla_x \p^2_{y_n} G_1(x,y)\cdot \nabla_x \p^2_{z_n} G_2(x,z)\diff x +\\
        &+ \int_{\mathcal{W}_1} (q^{(2)}-q^{(1)})(x) \p^2_{y_n} G_1(x,y) \p^2_{z_n} G_2(x,z) \diff x,
    \end{split}
\end{equation*}
by \eqref{estim3} and \eqref{eqn: taurestimate}, it turns out that
\begin{equation}\label{eqn3: interm 2derS0}
|\p^2_{y_n}\p^2_{z_n} S_1(y_r,y_r)| \leq C \left( \frac{\ep+\delta_1^* }{\ep+ \delta_1^* +E} \right)^{\beta^{2N_1}(12r_1\ln3)^{-2}r^2} (\ep+\delta_1^* +E) r^{-2-n}.
\end{equation}
Collecting together \eqref{eqn3: interm 2det I1} and \eqref{eqn3: interm 2derS0}, one derives
\begin{align*}
    |(\p_{\nu}(q^{(2)}_1-q^{(1)}_1)(P_1))| r^{1-n} &\leq C (\varepsilon + E) \omega^{(2)}_{\eta_2}\left(\frac{\varepsilon}{\varepsilon+E}\right) r^{-2-n} + C E r^{1+\theta_2 -n} +\\
    &+C \left( \frac{\ep+\delta_1^* }{\ep+ \delta_1^* +E} \right)^{\beta^{2N_1}(12r_1\ln3)^{-2}r^2} (\ep+\delta_1^* +E) r^{-2-n}.
\end{align*}
Multiply by $r^{n-1}$ the last equation and optimize with respect to $r$ leads to the estimate
\begin{equation}\label{eqn3: bound nuq1}
    |(\p_{\nu}(q^{(2)}_1-q^{(1)}_1)(P_1))| \leq C (E + \ep) \omega^{(3)}_{\tilde \eta_2} \left( \frac{\ep}{\ep + E}\right),
\end{equation}
with $0<\tilde \eta_2<1$ that depends on $\theta_1, \theta_2, \tilde b$.
\bigskip

For the general case, consider the following identities:
\begin{equation}
    \begin{split}
        &\int_{\Sigma} [\sigma^{(2)}(x)\nabla_x G_2(x,z)\cdot \nu\, G_1(x,y) - \sigma^{(1)}(x) \nabla_x G_1(x,y)\cdot\nu\, G_2(x,z) ] \diff S(x) =\\
        &=S_{k-1}(y,z) + \int_{\mathcal{W}_{k-1}} [(\sigma^{(1)}-\sigma^{(2)})(x) \nabla_x G_1(x,y)\cdot \nabla_x G_2(x,z) + (q^{(2)}-q^{(1)})(x) G_1(x,y) G_2(x,z)] \diff x,
    \end{split}
\end{equation}
by \eqref{eqn: 1der} one derives 
\begin{equation}
    \begin{split}
        &\int_{\Sigma} \sigma^{(2)}(x)\nabla_x \p_{z_n} G_2(x,z)\cdot \nu\, \p_{y_n} G_1(x,y) - \sigma^{(1)}(x) \nabla_x \p_{y_n} G_1(x,y)\cdot\nu\, \p_{z_n} G_2(x,z) ] \diff S(x) =\\
        &=\int_{\mathcal{W}_{k-1}} [(\sigma^{(1)}-\sigma^{(2)})(x) \nabla_x \p_{y_n} G_1(x,y)\cdot \nabla_x \p_{z_n} G_2(x,z)\diff x+\\
        &+ \int_{\mathcal{W}_{k-1}}(q^{(2)}-q^{(1)})(x) \p_{y_n} G_1(x,y) \p_{z_n} G_2(x,z) \diff x+ \p_{y_n} \p_{z_n} S_{k-1}(y,z),
    \end{split}
\end{equation}
and
\begin{equation}
    \begin{split}
        &\int_{\Sigma} [\sigma^{(2)}(x)\nabla_x \p^2_{z_n} G_2(x,z)\cdot \nu\, \p^2_{y_n} G_1(x,y) - \sigma^{(1)}(x) \nabla_x \p^2_{y_n} G_1(x,y)\cdot\nu\, \p^2_{z_n} G_2(x,z) ] \diff S(x) =\\
        &=\int_{\mathcal{W}_{k-1}} (\sigma^{(1)}-\sigma^{(2)})(x) \nabla_x \p^2_{y_n} G_1(x,y)\cdot \nabla_x \p^2_{z_n} G_2(x,z) \diff x +\\
        &+ \int_{\mathcal{W}_{k-1}} (q^{(2)}-q^{(1)})(x) \p^2_{y_n} G_1(x,y) \p^2_{z_n} G_2(x,z) \diff x+ \p^2_{y_n} \p^2_{z_n} S_{k-1}(y,z).
    \end{split}
\end{equation}

Notice that
\begin{equation}
    \begin{split}
    |S_{k-1}(y,z)| &\leq C (\ep + \delta^*_{k-1}), \qquad \text{for any }y,z\in D_0.
    \end{split}
\end{equation} 
To estimate the norms
\[
\|\gamma^{(1)}-\gamma^{(2)}\|_{L^{\infty}(D_k)} \quad \text{and}\quad \|q^{(1)}-q^{(2)}\|_{L^{\infty}(D_k)}
\]
one can proceed as in step $k=2$. Consider $\rho=r_0\slash 4$, $r\in(0,\bar{r}\slash 8)$ and set $w=P_{k}+r\nu(P_{k})$, then we split the integral solutions $S_{k-1}(w,w)$, $\p_{y_n}\p_{z_n}S_{k-1}(w,w)$ and $\p^2_{y_n}\p^2_{z_n} S_{k-1}(w,w)$ into the sum of two integrals over the domains $B_{\rho}(P_{k})\cap D_{k}$ and $\mathcal{U}_k\setminus(B_{\rho}(P_{k+1})\cap D_{k+1})$. At this point, one determines a lower bound for the intergral on the smallest domain and an upper bound for the integral on the largest domain using the estimates of Proposition \ref{proposizione unique continuation finale}. It turns out that
\begin{equation}\label{eqn: bound gammak}
     \|\gamma^{(1)}-\gamma^{(2)}\|_{L^{\infty}(D_k)} \le C (\ep + E) \omega^{(3k-4)}_{\eta_k} \Big(\frac{\ep}{\ep + E} \Big),
\end{equation}
and then by applying also \eqref{eqn: bound gammak},
\begin{equation}
     \|q^{(1)}-q^{(2)}\|_{L^{\infty}(D_k)} \le C (\ep + E) \omega^{(3(k-1))}_{\tilde \eta_k} \Big(\frac{\ep}{\ep + E} \Big),
\end{equation}
with $0<\eta_k, \tilde\eta_k<1$ constants that depend on the a-priori data only. 

Set $\bar{K}=\max\{ K, \tilde K\}$. Since $E=\delta^*_{\bar{K}}$, one derives
\[
E \leq C (\ep+E)\omega_{\tilde\eta_{\bar{K}}}^{(3(\bar{K}-1))}\left(\frac{\varepsilon}{\varepsilon+E}\right).
\]

If $E\geq e^2\ep$ (otherwise, the statement holds), it turns out that
\begin{equation}\label{eqn: lasteqn}
1 \leq C \omega_{\tilde\eta_{\bar{K}}}^{(3(\bar{K}-1))}\left(\frac{\ep}{E}\right).
\end{equation}
By applying the inverse of $\omega^{(3(\bar{K}-1))}_{\tilde\eta_{\bar{K}}}$ to \eqref{eqn: lasteqn}, we conclude that
\[
E \leq C_1\, \ep,
\] 
with $C_1$ a positive constant that depends on the a-priori data only. This concludes the proof of Theorem \ref{stability}.
\end{proof}

\section{Proof of the Auxiliary Propositions}\label{sec5}

To prove Proposition \ref{proposizione unique continuation finale}, we apply a result of propagation of smallness for elliptic PDEs with piecewise Lipschitz coefficients.

Let $\Omega\subset\R^n$ be a domain that satisfies the assumptions of Section \ref{sec: domain}.

First, we derive a three sphere inequality in terms of $L^{\infty}$ norms from the three sphere inequality demonstrated by \cite{Carstea2020}. 
\begin{lemma}\label{corollary appendix}
    Let $u\in H^1(B_{\rr})$ be a weak solution to
    \[
    \text{div}(\sigma\nabla u) + qu =0,\qquad \text{in }B_{\rr},
    \]
    with $B_{\rr} \subset (\Omega)_{r_0\slash 3}$, $\rr>0$. We assume that $\sigma,q$ satisfy the a-priori assumptions of Section \ref{sec: coefficient}. Then, for any $0<r_1<r_2<r_3\leq \rr$, the following inequality holds:
    \begin{equation}
        \|u\|_{L^{\infty}(B_{r_2})} \leq C_{\infty} \|u\|_{L^{\infty}(B_{r_1})}^{\beta} \|u\|_{L^{\infty}(B_{r_3})}^{1-\beta},
    \end{equation}
    where $\displaystyle \beta = \ln \Big(\frac{2r_3}{r_2+r_3}\Big)\slash\ln \Big(\frac{r_3}{r_1}\Big), \beta \in(0,1)$ and $C_{\infty}>1$ depends on $\bar{q}, \bar{A}, \frac{r_1}{r_2}, \frac{r_2}{r_3}, r_0, L, \lambda$.
\end{lemma}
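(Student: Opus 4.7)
The plan is to interpolate between the $L^2$ three-sphere inequality of Carstea and Wang \cite{Carstea2020} and the Moser $L^2\!\to\!L^{\infty}$ local boundedness estimate. The starting point is that \cite{Carstea2020} provides, for weak solutions of $\mathrm{div}(\sigma\nabla u)+qu=0$ with the piecewise Lipschitz coefficients of Section \ref{sec: coefficient}, an estimate of the form
\begin{equation*}
\|u\|_{L^2(B_{\rho_2})} \leq C\,\|u\|_{L^2(B_{\rho_1})}^{\tilde\beta}\,\|u\|_{L^2(B_{\rho_3})}^{1-\tilde\beta},
\qquad \tilde\beta = \frac{\ln(\rho_3/\rho_2)}{\ln(\rho_3/\rho_1)},
\end{equation*}
for all $0 < \rho_1 < \rho_2 < \rho_3 \leq \bar r$. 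Complementarily, the standard De Giorgi--Nash--Moser argument, applied to the equation after absorbing the zero-order term $qu$ via Sobolev embedding (using only $\|q\|_{L^\infty}\leq \bar q$, with no sign hypothesis), gives the local boundedness bound
\begin{equation*}
\|u\|_{L^{\infty}(B_r)} \leq C\,(R-r)^{-n/2}\,\|u\|_{L^2(B_R)}, \qquad 0 < r < R \leq \bar r,
\end{equation*}
with $C$ depending only on $\bar q$, $\bar A$ and the ellipticity constants.

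To match the exponent in the statement, I would choose the intermediate radii $\rho_1 = r_1$, $\rho_3 = r_3$, $\rho_2 = (r_2+r_3)/2$, which gives
\begin{equation*}
\tilde\beta = \frac{\ln\bigl(2r_3/(r_2+r_3)\bigr)}{\ln(r_3/r_1)} = \beta.
\end{equation*}
Then I would apply Moser's estimate on the pair $(r_2, \rho_2)$, noting $\rho_2 - r_2 = (r_3-r_2)/2$, followed by the Carstea--Wang inequality on $(\rho_1,\rho_2,\rho_3)$, and finally replace the two $L^2$ norms on the right by $L^{\infty}$ norms via the elementary bounds $\|u\|_{L^2(B_{r_1})}\leq C r_1^{n/2}\|u\|_{L^{\infty}(B_{r_1})}$ and $\|u\|_{L^2(B_{r_3})}\leq C r_3^{n/2}\|u\|_{L^{\infty}(B_{r_3})}$. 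Composing the three steps produces an inequality of the announced form, with $C_{\infty}$ collecting the Moser constant, the Carstea--Wang constant, and the various explicit powers of the radii, hence depending only on $\bar q$, $\bar A$, $r_1/r_2$, $r_2/r_3$, $r_0$, $L$ and $\lambda$.

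The principal obstacle is to confirm that the three-sphere inequality of \cite{Carstea2020} is applicable in the present configuration: the ball $B_{\bar r}$ may intersect interfaces of the partition $\{D_m\}$ across which $\sigma$ and $q$ are only piecewise Lipschitz, and the assumption $B_{\bar r}\subset (\Omega)_{r_0/3}$ is what guarantees enough interior margin for the geometric hypotheses of \cite{Carstea2020} on the interfaces $\Sigma_{m+1}$ of class $C^2$ to be met uniformly. A secondary concern is to check that the Moser iteration can be carried out uniformly in $q$ despite the absence of a sign condition; this is routine but must be done carefully to ensure the final constant depends only on the a-priori data and on the stated ratios.
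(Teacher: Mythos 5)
Your proposal is correct and follows essentially the same route as the paper: the $L^{\infty}$--$L^2$ Moser/Stampacchia local boundedness estimate applied with radii $r_2$ and $(r_2+r_3)/2$, composed with the $L^2$ three-sphere inequality of Carstea--Wang on $(r_1,(r_2+r_3)/2,r_3)$, and the trivial bounds $\|u\|_{L^2(B_{r_i})}\leq |B_{r_i}|^{1/2}\|u\|_{L^{\infty}(B_{r_i})}$ to pass back to $L^{\infty}$ norms, yielding exactly the stated exponent $\beta$.
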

\begin{proof}
The proof of this Lemma relies on the well-known $L^{\infty}-L^2$ Moser-Stampacchia estimates valid for elliptic equations with zeroth order term. By \cite[Theorem 8.17]{Gilbarg2001} (see also \cite[Theorem 6.1]{Brummelhuis1995}), there exists a constant $C>1$ that depends only on $\lambda, \bar{q}, \bar{A}, n$ such that, for any $0<r<\rho$,
    \begin{equation}\label{eqn: GT}
    \|u\|_{L^{\infty}(B_{r})} \leq \frac{C}{(\rho-r)^{\frac{n}{2}}} \|u\|_{L^2(B_{\rho})}.
    \end{equation}
    By \cite[Theorem 4.1]{Carstea2020} and \eqref{eqn: GT}, if we choose $r=r_2$, $\rho = (r_2+r_3)\slash 2$,
    \begin{equation*}
        \begin{split}
            \|u\|_{L^{\infty}(B_{r_2})} &\leq \frac{C}{\left(\frac{r_2+r_3}{2} - r_2\right)^{\frac{n}{2}}} \|u\|_{L^2(B_{\frac{r_2+r_3}{2}})}\\
            &\leq \frac{C}{\left(\frac{r_2+r_3}{2} - r_2\right)^{\frac{n}{2}}} \|u\|_{L^{2}(B_{r_1})}^{\beta} \|u\|_{L^{2}(B_{r_3})}^{1-\beta}\\
            &\leq \frac{C}{\left(\frac{r_2+r_3}{2} - r_2\right)^{\frac{n}{2}}} |B_{r_1}|^{\beta}  |B_{r_3}|^{\frac{1-\beta}{2}}\|u\|_{L^{\infty}(B_{r_1})}^{\beta} \|u\|_{L^{\infty}(B_{r_3})}^{1-\beta}.
        \end{split}
    \end{equation*}
\end{proof}

% Consider the coefficients
% \begin{align*}
% \sigma(x) = \left(\sum_{j=1}^N \gamma_j(x) \chi_{D_j}(x) \right) A(x) \qquad\text{and}\qquad q(x) = \sum_{j=1}^N q_j(x) \chi_{D_j}(x).
% \end{align*}
In the following Proposition we derive a result of propagation of smallness valid in our setting (see also \cite[Lemma 4.1]{Alessandrini2018} and \cite[Proposition 3.9]{Beretta2013}).

\begin{proposition}\label{preliminaryprop}
	For $k=0,\dots, N-1$ assume that there is a weak solution $v\in H^1(\mathcal{W}_k)$ to
	\begin{equation}\label{eq Wk}
		\mbox{div}(\sigma\,\nabla v) +qv =0 \qquad \mbox{in }\mathcal{W}_k.
	\end{equation}
	Suppose that for any given positive number $E_0,\varepsilon_0, \tilde \gamma$, the function $v$ satisfies
	\begin{equation}\label{first}
		|v(x)|\leq \ep_0 \qquad \text{for any } x\in D_0,
	\end{equation}
	and
	\begin{equation}\label{second}
		|v(x)|\leq C(E_0 +\ep_0)\big(r_0 d(x)\big)^{-\tilde\gamma}\qquad \text{for any } x\in \mathcal{W}_k,
	\end{equation}
    with $\tilde\gamma = n\slash 2-1$.
     Let $\bar{r}$ be the constant of Proposition \ref{corollary appendix}. Then, for any $r\in (0,\bar{r}\slash 4)$, there exist constants $C>1$ and $N_1\in \N$ such that
	\begin{equation}\label{eqn: singular}
		|v(y_{k+1})|\leq  C (E_0 + \ep_0)\Big(\frac{\ep_0}{\ep_0 + E_0}\Big)^{\tau_r \beta^{N_1}} r^{-\tilde \gamma},
	\end{equation}
	where $C, N_1$ depend on $r_0$, $L$, $\lambda$, $\bar{\sigma}, \bar{q}$ only, $y_{k+1} = P_{k+1} - r\nu(P_{k+1})$ with $\nu(P_{k+1})$ the outward unit normal of $\p D_k$ at $P_{k+1}$, $0<\beta<1$, $N_1\in\N$ and
    \[
    \tau_r = \ln\left(\frac{12r_1-2r}{12r_1-3r}\right)\slash \ln\left(\frac{6r_1-r}{2r_1} \right).
    \]
\end{proposition}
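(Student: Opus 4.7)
The target inequality has the structure of a classical propagation of smallness estimate: an $L^{\infty}$ bound at the isolated point $y_{k+1}$ is obtained from the pointwise smallness $|v|\leq \varepsilon_0$ on the ``reservoir'' $D_0$ and the a priori growth bound \eqref{second}. The natural tool is the $L^{\infty}$ three sphere inequality of Lemma \ref{corollary appendix}, iterated along a chain of balls joining $D_0$ to $y_{k+1}$. The factor $\beta^{N_1}$ in \eqref{eqn: singular} suggests $N_1$ applications at a fixed scale, and the factor $\tau_r$ suggests one further application at a scale comparable to $r$ to finally reach a point at distance $r$ from the interface. Thus the strategy is to combine a fixed-scale chain with one final scaled three-sphere step.

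\textbf{Step 1: fixed-scale chain.} Fix a constant $r_1>0$ depending only on the a priori data (this is the same $r_1$ appearing in $\tau_r$). Using the connectedness of $\mathcal{W}_k$ and the geometric hypotheses (D1)--(D3) on the partition, I construct a finite chain of concentric triples of balls $\{B_{\rho_1}^{(i)}\subset B_{\rho_2}^{(i)} \subset B_{\rho_3}^{(i)}\}_{i=1}^{N_1}$, all of fixed radii depending only on the a priori data and all contained in $(\mathcal{W}_k)_{\rho_0}$ for some fixed $\rho_0>0$, such that $B_{\rho_1}^{(1)}\subset D_0$, $B_{\rho_1}^{(i+1)}\subset B_{\rho_2}^{(i)}$, and $B_{\rho_2}^{(N_1)}$ contains a distinguished point $\tilde{x}$ with $|\tilde x - P_{k+1}|=6r_1-r$ on the ray $P_{k+1}-s\nu(P_{k+1})$. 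On every ball the a priori bound \eqref{second} yields $\|v\|_{L^{\infty}}\leq C(E_0+\varepsilon_0)$, while $\|v\|_{L^{\infty}(B_{\rho_1}^{(1)})}\leq\varepsilon_0$. Iterating Lemma \ref{corollary appendix} $N_1$ times produces, on a ball of radius $2r_1$ around $\tilde x$, an estimate of the form
\[
\|v\|_{L^{\infty}(B_{2r_1}(\tilde x))} \leq C(E_0+\varepsilon_0)\Big(\tfrac{\varepsilon_0}{E_0+\varepsilon_0}\Big)^{\beta^{N_1}},
\]
with a fixed exponent $\beta\in(0,1)$ depending only on the a priori data.

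\textbf{Step 2: final scaled step.} I apply Lemma \ref{corollary appendix} once more, with concentric balls centered at $\tilde x$ of radii $2r_1 < 6r_1-2r < 6r_1-r$. Substituting these three radii into the formula $\beta=\ln(2r_3/(r_2+r_3))/\ln(r_3/r_1)$ of Lemma \ref{corollary appendix} gives exactly the exponent $\tau_r$. The smallness input on the inner ball $B_{2r_1}(\tilde x)$ is the bound from Step 1; the growth bound on the outer ball $B_{6r_1-r}(\tilde x)$ follows from \eqref{second} since points inside it stay at distance $\gtrsim r$ from the interface, giving $\|v\|_{L^{\infty}(B_{6r_1-r}(\tilde x))}\leq C(E_0+\varepsilon_0)r^{-\tilde\gamma}$. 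Noting that $y_{k+1}$ can be arranged to lie in the middle ball $B_{6r_1-2r}(\tilde x)$ by the choice of $\tilde x$ on the inward normal at $P_{k+1}$, Lemma \ref{corollary appendix} yields
\[
|v(y_{k+1})| \leq C\Bigl[(E_0+\varepsilon_0)\Big(\tfrac{\varepsilon_0}{E_0+\varepsilon_0}\Big)^{\beta^{N_1}}\Bigr]^{\tau_r}\bigl[(E_0+\varepsilon_0)r^{-\tilde\gamma}\bigr]^{1-\tau_r},
\]
and combining the $(E_0+\varepsilon_0)$-factors gives precisely \eqref{eqn: singular}.

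\textbf{Main obstacle.} The most delicate point is the geometric construction of the chain in Step 1 with parameters $N_1$, $\rho_0$, $\beta$ controlled solely by the a priori data. This requires exploiting the $C^2$ regularity and the uniform bounds in (D1)--(D3) to guarantee that fixed-scale balls fit inside each $D_m$ away from the interfaces, so that Lemma \ref{corollary appendix} applies with uniform constants and can be chained through consecutive subdomains along the path from $D_0$ to $y_{k+1}$. A secondary issue is the positioning of $\tilde x$ so that the specific radii $2r_1, 6r_1-2r, 6r_1-r$ produce exactly the factor $\tau_r$; once the chain has been set up, this is a direct geometric check, and the observation \eqref{eqn: taurestimate} shows that $\tau_r\gtrsim r$ is the correct order for the subsequent use of the proposition in Section \ref{sec: 4}.
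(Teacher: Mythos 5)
Your overall route is the one the paper takes: propagate the smallness from $D_0$ by iterating the $L^\infty$ three sphere inequality of Lemma \ref{corollary appendix} along a fixed-scale chain of balls inside $\mathcal{W}_k$ (the paper does this along a Jordan curve, reaching a point $\bar y_{k+1}$ on the inward normal at $P_{k+1}$ at a fixed distance of order $r_1$ from the interface), and then perform one last three-sphere step whose scale-dependent exponent produces $\tau_r$, using \eqref{second} to control the outer ball and absorbing $r^{-(1-\tau_r)\tilde\gamma}$ into $r^{-\tilde\gamma}$. Your Step 1 is in substance the paper's chain argument. The problem is the geometry of your Step 2.

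With your choice $\tilde x=P_{k+1}-(6r_1-r)\nu(P_{k+1})$ and outer radius $6r_1-r$, the outer ball $B_{6r_1-r}(\tilde x)$ is internally tangent to the flat interface $\Sigma_{k+1}$ at $P_{k+1}$. Hence points of that ball come arbitrarily close to $\Sigma_{k+1}\subset\partial\mathcal{W}_k$, i.e. $d(x)\to 0$ there, so \eqref{second} gives $\sup_{B_{6r_1-r}(\tilde x)}|v|\leq C(E_0+\ep_0)\sup_{B_{6r_1-r}(\tilde x)}(r_0 d(x))^{-\tilde\gamma}=+\infty$ rather than the claimed $C(E_0+\ep_0)r^{-\tilde\gamma}$; your assertion that points of the outer ball stay at distance $\gtrsim r$ from the interface is false, and in the intended application $v=S_k(\cdot,z)$ genuinely blows up at the rate $d(x)^{-\tilde\gamma}$, so the bound cannot be recovered. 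Nor can the center be repositioned with your radii: to have $y_{k+1}\in\overline{B_{6r_1-2r}(\tilde x)}$ you need $\mathrm{dist}(\tilde x,\Sigma_{k+1})\leq 6r_1-r$, while keeping $B_{6r_1-r}(\tilde x)$ at distance $\geq c\,r$ from $\Sigma_{k+1}$ forces $\mathrm{dist}(\tilde x,\Sigma_{k+1})\geq 6r_1-r+c\,r$; since the gap between your outer and middle radii equals exactly $r$, the two requirements are incompatible (tangency is the borderline case you hit). The fix is to shrink the triple by a factor $2$, which is what the paper does: radii $r_1$, $3r_1-r$, $3r_1-r/2$ centered at $P_{k+1}-3r_1\nu(P_{k+1})$. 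The three-sphere exponent depends only on the ratios of the radii, so it is still $\tau_r$; the closed middle ball still reaches $y_{k+1}$, and the outer ball now stays at distance $r/2$ from $\Sigma_{k+1}$, where \eqref{second} does give the $C(E_0+\ep_0)(r_0 r/2)^{-\tilde\gamma}$ bound needed to conclude \eqref{eqn: singular} exactly as in your final display.
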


\begin{proof}[Proof of Proposition \ref{preliminaryprop}]
The proof follows the lines of \cite[Theorem 4.1]{Francini2023} and \cite[Proposition 3.9]{Beretta2013}. Let $P_0\in (D_0)_{r_0\slash 3}$, let $r_{00}>0$ be such that $B_{r_{00}} (P_0)\subset (D_0)_{r_0\slash 3}$. By \eqref{first}, 
\[
|v(x)| \leq \varepsilon_0\qquad \text{for any }x\in B_{r_{00}}(P_0).
\]
Let $\bar{y}_{k+1}= P_{k+1}-3\bar{r}\nu(P_{k+1})$, where $\nu(P_{k+1})$ is the outer unit normal of $\p D_{k}$ at $P_{k+1}$. For any point $y_0\in B_{r_{00}}(P_0)$ there exists a Jordan curve contained in $\mathcal{W}_k$ that joins $y_0$ to $\bar{y}_{k+1}$. Call this curve $c(t)\in C([0,1],\mathcal{W}_k)$, so that $c(0)=y_0$ and $c(1)=\bar{y}_{k+1}$. Let
\[
r_3=\frac{\bar{r}}{2}, \quad r_2 = \frac{3}{4}r_3,\quad r_1 = \frac{r_3}{4},
\]
so that $B_{r_1}(y_0)\subset B_{r_3}(y_0) \subset (D_0)_{r_0\slash 3}$. Define $0=t_0<t_1<\dots<t_{\bar{N}}=1$ so that 
\begin{align*}
    &t_{k+1} = \max\{t\,:\,|c(t)-c(t_k)|=2r_1\} \qquad\text{as long as }\quad |\bar{y}_{k+1} -c(t_k)|>2r_1\\
    &\text{otherwise }\bar{N}=k+1,\,\, t_{\bar{N}}=1.
\end{align*}
Notice that $B_{r_1}(c(t_k))\cap B_{r_1}(c(t_{k+1}))=\emptyset$ and $B_{r_1}(c(t_{k+1}))\subset B_{r_2}(c(t_k))$ for $k=1,\dots,\bar{N}-1$. Thanks to Lemma \ref{corollary appendix}, one can propagate the estimate $|v(y_0)|$ along the Jordan curve up to a ball centred at $\bar{y}_{k+1}$ of radius $r_1$ across the flat interfaces $\Sigma_m$ for $m\in \{1,\dots,k\}$. Hence, one derives
\[
|v(y_{k+1})| \leq C \ep_0^{\beta^{N_1}} (\ep_0 +E_0)^{1-\beta^{N_1}},
\]
with $0<\beta<1$, $N_1\in \N$ and $C>0$ depend on the a priori data only. 

Let $r<r_1$, $y_{k+1} =P_{k+1} -  r \nu(P_{k+1})$, then we can apply Lemma \ref{corollary appendix} to spheres centred at $\bar{y}_{k+1}$ of radii $r_1, 3r_1-r, 3r_1 - r\slash 2$ to obtain
\[
\|v\|_{L^{\infty}(B_{3r_1-r}(\bar{y}_{k+1})}\leq C r^{-(1-\tau_r)\tilde\gamma} \left( \frac{\ep_0}{\ep_0 + E_0}\right)^{\tau_r \beta^{N_1}} (\ep_0 +E_0)
\]
with
\[
\tau_r=\log\left(\frac{12r_1-2r}{12r_1-3r}\right)\slash \log\left(\frac{6r_1-r}{2r}\right).
\]
One derives \eqref{eqn: singular} by observing that
\[
C_1 r^{-\tilde\gamma} \leq r^{-(1-\tau_r)\tilde\gamma} \leq C_2 r^{-\tilde\gamma}.
\]
\end{proof}

 We are ready to prove the quantitative estimates of unique continuation for the singular solutions. 

 \begin{proof}[Proof of Proposition \ref{proposizione unique continuation finale}]
  Notice that for $y,z\in\mathcal{W}_k$,
  \[
  |S_k(y,z)|\leq C E (d(y)d(z))^{1-\frac{n}{2}},
  \]
  where $d(y)$ is the distance of $y$ from $\mathcal{U}_k$. 
  
 Fix $z\in (D_0)_{r_0\slash 3}$ and set $v(y)= S_k(y,z)$, then $v$ is a weak solution to 
 \[
 \nabla_y\cdot(\sigma^{(1)}\nabla_y v) + q^{(1)} v= 0, \qquad \text{in }\mathcal{W}_k.
 \]
 Notice that for $y\in\mathcal{W}_k$,
 \[
 |v(y)| \leq C E (d(y))^{1-\frac{n}{2}}.
 \]
 Thus, by applying Proposition \ref{preliminaryprop}, for $r\in (0,\bar{r}\slash 4)$, $z\in (D_0)_{r_0\slash 3}$ and $y_{k+1}= P_{k+1} - r\nu(P_{k+1})$, 
 \[
 |S_k(y_{k+1}, z)| \leq C r^{-\tilde\gamma} \left( \frac{\ep_0}{\ep_0 + E} \right)^{\tau_r \beta^{N_1}} (\ep_0 + E),
 \]
 with $\tilde \gamma = n\slash2 -1$. Now, set $\tilde v(z) = S_k(y_{k+1}, z)$ for $z\in \mathcal{W}_k$. Then $\tilde v$ is a weak solution to 
 \[
 \nabla_z\cdot(\sigma^{(2)}\nabla_z \tilde v) + q^{(2)} \tilde v = 0, \qquad \text{in }\mathcal{W}_k.
 \] 
 Since
 \[
 |\tilde v(z)| \leq C\,E\,(r\text{dist}(z,\Sigma_{k+1}))^{1-\frac{n}{2}},\qquad \text{for any }z\in \mathcal{W}_k,
 \]
 then
 \[
 |S_k(y_{k+1}, y_{k+1})| \leq C r^{-2\tilde\gamma}\left( \frac{\ep_0}{\ep_0 + E} \right)^{\tau_r^2 \beta^{2N_1}} (\ep_0 +E).
 \]
Let us determine the estimates for the partial derivatives of the integral solution. Since $S_k(y_1,\dots,y_n,z_1,\dots,z_n)$ is a weak solution to
 \[
 \nabla_y\cdot(\sigma^{(1)}\nabla_y S_k(y,z)) +\nabla_z\cdot(\sigma^{(2)}\nabla_z S_k(y,z)) + 
 q^{(1)} S_k(y,z) + q^{(2)} S_k(y,z) = 0, \qquad \text{in }D_k\times D_k,
 \]
one can apply the Schauder interior estimates (see \cite{Agmon1959} or \cite{stein1970}) at $y_{k+1}=P_{k+1} - 2r\nu(P_{k+1})$ and derive
\begin{equation*}
    \begin{split}
        \|\p_{y_j}\p_{z_i}S_k(y,z)\|_{L^{\infty}(B_{\frac{r}{2}}(y_{k+1})\times B_{\frac{r}{2}}(y_{k+1}))}\leq \frac{C}{r^2} \|S_k(y,z)\|_{L^{\infty}(B_{r}(y_{k+1})\times B_{r}(y_{k+1}))},
    \end{split}
\end{equation*}
and
\begin{equation*}
    \begin{split}
        \|\p^2_{y_j}\p^2_{z_i}S_k(y,z)\|_{L^{\infty}(B_{\frac{r}{4}}(y_{k+1})\times B_{\frac{r}{4}}(y_{k+1}))}\leq \frac{C}{r^2} \|\p_{y_j}\p_{z_i}S_k(y,z)\|_{L^{\infty}(B_{\frac{r}{2}}(y_{k+1})\times B_{\frac{r}{2}}(y_{k+1}))}.
    \end{split}
\end{equation*}
From the previous step, the thesis follows.
 \end{proof}

 \begin{proof}[Proof of Proposition \ref{prop: asymptotic estimates}]
    We prove \eqref{eqn: asymptotic3}. Fix $m\in\{0,\dots,N-1\}$ and let $Q_{m+1}\in \Sigma_{m+1}\cap B_{\frac{r_0}{4}}(P_{m+1})$. Up to a change of coordinates, we can assume that $Q_{m+1}$ coincides with the origin. Let $\gamma^+=\gamma_{m+1}(0)$, $\gamma^-=\gamma_{m}(0)$, $A=A(0)$, and define
    \[
    \sigma_0(x)=(\gamma^+\chi_{\R^n_+}(x) + \gamma^-\chi_{\R^n_-}(x))A.
    \]
    For simplicity, we write $y$ in place of $y_{m+1}$.
    Let $H$ be the fundamental solution associated to the elliptic operator div$(\sigma_0\nabla\cdot)$. For $y\in \Omega_0$, let $G(\cdot,y)$ be the weak solution to the boundary value problem \eqref{eqn: green system}. Define 
    \[
    R(x,y) := G(x,y) - H(x,y).
    \]
    For $y\in \Omega_0$, $R(\cdot,y)$ is a weak solution to
    \begin{equation*}
        \begin{cases}
            \mbox{div}(\sigma \nabla R(\cdot,y)) + q R(\cdot,y) = \mbox{div}((\sigma_0-\sigma)\nabla H(\cdot,y)) - q H(\cdot,y) &\text{in }\Omega_0,\\
            R(\cdot,y)=- H(\cdot,y) &\text{on }\p\Omega_0\setminus\Sigma_0,\\
            \sigma \nabla R(\cdot,y)\cdot\nu + i R(\cdot,y) = -\sigma \nabla H(\cdot,y)\cdot\nu - i H(\cdot,y) &\text{on }\Sigma_0.
        \end{cases}
    \end{equation*}
    By Green's identity, one derives
    \begin{equation*}
        \begin{split}
            R(x,y) &= -\int_{\Omega_0} (\sigma - \sigma_0)(z) \nabla_z H(z,y)\cdot \nabla_z G(z,x) \diff z +\int_{\Omega_0} H(z,y) q(z) G(z,x) \diff z +\\
            &+\int_{\p\Omega_0\setminus \Sigma_0} \sigma(z)\nabla_z G(z,x)\cdot\nu H(z,y) \diff S(z) - \int_{\Sigma_0} [\sigma_0 \nabla_z H(z,y)\cdot \nu + i H(z,y)] G(z,x) \diff S(z).
        \end{split}
    \end{equation*}
    Set $B=B_{r_0\slash 4}(Q_{m+1})$ and define
    \begin{equation*}
        \begin{split}
            \tilde R(x,y) &= \int_B (\sigma_0-\sigma)(z) \nabla_z H(z,y)\cdot \nabla_z G(z,x) \diff z +\int_{B} H(z,y) q(z) G(z,x) \diff z
        \end{split}
    \end{equation*}
    Since 
    \[
    |\nabla_y(R(x,y) - \tilde R(x,y))| \leq C,
    \]
    and
    \[
    |\nabla^2_y(R(x,y) - \tilde R(x,y))| \leq C,
    \]
    one has to study only the asymptotic behaviour of $\nabla_y\tilde R(x,y)$ and $\nabla_y^2 \tilde R(x,y)$. Let us prove an upper bound for $\nabla_y\tilde R(x,y)$. Set $B' = B'_{r_0\slash 4}$,
    \begin{align*}
            &B^+=\{x\in B\,:\,x_n>0\} \qquad B^-=\{x\in B\,:\,x_n<0\},\\
            &q^+=q|_{B^+},\quad q^-=q|_{B^-},\qquad[q]=(q^+-q^-)|_{B'},\\
            &\gamma^+=\gamma|_{B^+},\quad \gamma^-=\gamma|_{B^-},\qquad[\sigma]=(\sigma^+-\sigma^-)|_{B'}= (\gamma^+-\gamma-)|_{B'} A|_{B'}.
    \end{align*}
    It turns out that for $i=1,\dots,n$,
    \begin{equation}\label{eqn: firstremainder}
        \begin{split}
            \p_{y_i} &\tilde R(x,y) = \int_B \p_{z_i}((\sigma-\sigma_0)(z)\nabla_z H(z,y))\cdot \nabla_z G(z,x) \diff z - \int_B \p_{z_i} H(z,y) q(z) G(z,x) \diff z=\\
            &= \int_{\p B} (\sigma-\sigma_0)(z)\nabla_z H(z,y)\cdot \nabla_z G(z,x) e_i\cdot\nu \diff z - \int_{\p B} H(z,y) q(z) G(z,x) e_i\cdot\nu\diff z - \\
            &- \int_{B'} [(\sigma-\sigma_0)(z')] \nabla_z H(z',y)\cdot \nabla_z G(z',x) e_i\cdot e_n \diff z' + \int_{B'} H(z',y) [q(z')] G(z',x) e_i\cdot e_n \diff z' - \\
            &- \int_{B} (\sigma-\sigma_0)(z)\nabla_z H(z,y)\cdot \p_{z_i} \nabla_z G(z,x) \diff z + \int_{B} H(z,y) \p_{z_i}(q(z) G(z,x)) \diff z.         
        \end{split}
    \end{equation}
    Notice that $\p_{z_i}(\sigma-\sigma_0)(z)$ and $\p_{z_i}q(z)$ are well-defined on $B\setminus B'$. 
The first and second integrals on the right-hand side of \eqref{eqn: firstremainder} can be easily bounded by a positive constant that depends on the a priori data only. The fifth and sixth ones are dominated by
\begin{equation*}
    \begin{split}
        \int_B |(\sigma-\sigma_0)(z)|\,|\nabla_z H(z,y)|\,|\p_{z_i} \nabla_z G(z,x)| \diff z \leq C \int_B |z| |z-y|^{1-n} |z-x|^{-n} \leq C |x-y|^{1-n+\theta_3},
    \end{split}
\end{equation*}
with $\theta_3\in (0,1)$. Since $|x-y|^2 = |x_n + r|^2 + |x'|^2 \geq r^2$, one derives
\[
\int_B |(\sigma-\sigma_0)(z)|\,|\nabla_z H(z,y)|\,|\p_{z_i} \nabla_z G(z,x)| \diff z \leq C r^{1-n+\theta_3}.
\]
Notice that when $i\neq n$, the third and fourth integrals are equal to zero, hence
\[
|\p_{y_i} \tilde R(x,y)| \leq C |x-y|^{1-n+\theta_3}.
\]
When $i=j=n$, 
\begin{equation*}
    \begin{split}
        \Big|\int_{B'} [&(\sigma-\sigma_0)(z')] \nabla_z H(z',y)\cdot \nabla_z G(z',x) \diff z' +\\
            &+ \int_{B'} \p_{y_j} H(z',y) [q(z)] G(z',x) \diff z'\Big|\leq \\
            &\leq C \int_{B'} |z'| \cdot |z'-y|^{-n}\cdot |z'-x|^{1-n} \diff z\leq C |x-y|^{3-n-\alpha},
    \end{split}
\end{equation*}
with $0<\alpha<1$.
Hence we conclude that
\[
|\p_{y_n} \tilde R(x,y)| \leq C |x-y|^{1-n+\theta_3}, \qquad \text{with }\theta_3\in (0,1).
\]
The upper bound for $\nabla_y^2\tilde R(x,y)$ follows by similar computations. Indeed, by further differentiation, one derives
    \begin{equation}\label{eqn: second remainder}
        \begin{split}
            &\p_{y_j} \p_{y_i} \tilde R(x,y) = \int_{\p B} ((\sigma-\sigma_0)(z) \p_{y_j} \nabla_z H(z,y))\cdot \nabla_z G(z,x) e_i\cdot\nu \diff z - \\
            &-\int_{\p B} \p_{y_j} H(z,y) q(z) G(z,x) e_i\cdot\nu\diff z + \\
            &- \int_{B'} [(\sigma-\sigma_0)(z')] \p_{y_j} \nabla_z H(z',y)\cdot \nabla_z G(z',x) e_i\cdot e_n \diff z' +\\
            &+\int_{B'} \p_{y_j} H(z',y) [q(z')] G(z',x) e_i\cdot e_n \diff z' + \\
            &- \int_{B} \p_{y_j}(\sigma-\sigma_0)(z)\nabla_z H(z,y))\cdot \p_{z_i} \nabla_z G(z,x) \diff z + \\
            &+\int_{B} \p_{y_j} H(z,y) \p_{z_i}(q(z) G(z,x)) \diff z.          
        \end{split}
    \end{equation}
The first and second integrals on the righthand side of \eqref{eqn: second remainder} can be easily bounded. The fifth and sixth ones are dominated by
\begin{equation*}
    \begin{split}
        \Big|\int_B \p_{y_j}(\sigma-\sigma_0)(z)\,\nabla_z H(z,y)\,\p_{z_i} \nabla_z G(z,x) \diff z \Big| \leq C \int_B |z-y|^{1-n} |z-x|^{-n} \leq C |x-y|^{1-n}.
    \end{split}
\end{equation*}
Since $|x-y|^2 = |x_n + r|^2 + |x'|^2 \geq r^2$, one derives
\[
\Big|\int_B \p_{y_j}(\sigma-\sigma_0)(z)\,\nabla_z H(z,y)\,\p_{z_i} \nabla_z G(z,x) \diff z \Big| \leq C r^{1-n}.
\]
Notice that when $(i,j)\neq (n,n)$, the third and fourth integrals are equal to zero, hence
\[
|\p_{y_j} \p_{y_i} \tilde R(x,y)| \leq C |x-y|^{1-n}.
\]
When $i=j=n$, 
\begin{equation*}
    \begin{split}
        \Big|\int_{B'} [&(\sigma-\sigma_0)(z')] \p_{y_j} \nabla_z H(z',y)\cdot \nabla_z G(z,x) e_i\cdot e_n \diff z' +\\
            &+ \int_{B'} \p_{y_j} H(z',y) [q(z)] G(z',x) e_i\cdot e_n \diff z'\Big|\leq \\
            &\leq C \int_{B'} |z'| \cdot \frac{1}{|z'-y|^{n}}\cdot \frac{1}{|z'-x|^{1-n}} \diff z\leq C |x-y|^{2-n},
    \end{split}
\end{equation*}
Hence,
\[
|\p^2_{y_n} \tilde R(x,y)| \leq C |x-y|^{1-n}.
\]
\end{proof}

\section{Acknowledgements}
The work of SF was supported by the PRIN Grant No. 201758MTR2 and the INdAM GNAMPA project "Problemi inversi per equazioni alle derivate parziali e applicazioni" CUP\_E53C22001930001.

\appendix

\section{Appendix}

\begin{proof}[Proof of Theorem \ref{stability} (Stability at the boundary)]
Let $\{x_1,\dots,x_n\}$ be a coordinate system with origin at $P_1$. For any $y,z\in D_0$, the following identities hold:
\begin{equation}\label{eqna: green identity1}
    \begin{split}
        &\int_{\Sigma} [\sigma^{(2)}(x)\nabla_x G_2(x,z)\cdot \nu\, G_1(x,y) - \sigma^{(1)}(x) \nabla_x G_1(x,y)\cdot\nu\, G_2(x,z) ] \diff S(x) =\\
        &=\int_{\Omega} [(\sigma^{(1)}-\sigma^{(2)})(x) \nabla_x G_1(x,y)\cdot \nabla_x G_2(x,z) + (q^{(2)}-q^{(1)})(x) G_1(x,y) G_2(x,z)] \diff x,
    \end{split}
\end{equation}
and
\begin{equation}\label{eqna: 1der}
    \begin{split}
        &\int_{\Sigma} [\sigma^{(2)}(x)\nabla_x \p_{z_n} G_2(x,z)\cdot \nu\, \p_{y_n} G_1(x,y) - \sigma^{(1)}(x) \nabla_x \p_{y_n} G_1(x,y)\cdot\nu\, \p_{z_n} G_2(x,z) ] \diff S(x) =\\
        &=\int_{\Omega} [(\sigma^{(1)}-\sigma^{(2)})(x) \nabla_x \p_{y_n} G_1(x,y)\cdot \nabla_x \p_{z_n} G_2(x,z) + (q^{(2)}-q^{(1)})(x) \p_{y_n} G_1(x,y) \p_{z_n} G_2(x,z)] \diff x,
    \end{split}
\end{equation}
By \eqref{eqn: inequality Cauchy} and \eqref{eqna: green identity1}, 
 \begin{align}\label{eqna: bound S0}
\Big|\int_{\Sigma} [\sigma^{(2)}\nabla_x G_2(x,z)\cdot \nu\, G_1(x,y) - \sigma^{(1)} \nabla_x G_1(x,y)\cdot\nu\, G_2(x,z) ] \diff S(x)\Big| \leq C \ep\, (d(y) d(z))^{1-\frac{n}{2}},
\end{align}
where $d(y)$ denotes the distance between $y$ and $\Omega$. Notice that the norm $\|\gamma^{(1)}_1 - \gamma^{(2)}_1\|_{L^{\infty}(D_1)}$ can be evaluated in terms of the quantities
\begin{eqnarray*}
\|\gamma^{(1)}_1 - \gamma^{(2)}_1\|_{L^{\infty}(\Sigma_1\cap B_{\frac{r_0}{4}}(P_1))}\quad\mbox{and}\quad | \partial_{\nu}(\gamma^{(1)}_1 - \gamma^{(2)}_1)(P_1)|.
\end{eqnarray*}

Let $\rho=r_0\slash 4$, let $r\in(0,\bar{r}\slash 8)$ and set $w=P_1 + r\nu(P_1)$, where $\nu(P_1)$ is the outward unit normal of $\p D_1$ at $P_1$. Consider
\begin{equation}
    S_0(w,w) = I_1(w)+I_2(w),
\end{equation}
with
\begin{equation*}
    \begin{split}
        I_1(w) &= \int_{B_{\rho}(P_1)\cap D_1}(\gamma_1^{(1)}-\gamma_1^{(2)})(x)\:A(x)\,\nabla_x G_1(x,w)\cdot\nabla_x G_2(x,w) \diff x +\\
         &+\int_{B_{\rho}(P_1)\cap D_1}(q_1^{(2)}-q_1^{(1)})(x)\, G_1(x,w)\cdot G_2(x,w) \diff x,
    \end{split}
\end{equation*}
and
\begin{equation*}
    \begin{split}
        I_2(w) &= \int_{\Omega\setminus (B_{\rho}(P_1)\cap D_1)}(\sigma^{(1)}-\sigma^{(2)})(x)\,\nabla_x G_1(x,w)\cdot\nabla_x G_2(x,w) \diff x +\\
         &+\int_{\Omega \setminus (B_{\rho}(P_1)\cap D_1)}(q^{(2)}-q^{(1)})(x)\, G_1(x,w)\cdot G_2(x,w) \diff x.
    \end{split}
\end{equation*}
The volume integrals of $I_2(w)$ can be bounded from above via Caccioppoli inequality (see also \cite[Proposition 3.1]{Alessandrini2005}):
\begin{equation}\label{a1 stima I2}
    |I_2(w)|\leq C E \rho^{2-n}.
\end{equation}
	
Regarding $I_1(w)$, notice that there exists $x^*\in \overline{\Sigma_1\cap B_{\frac{r_0}{4}}(P_1)}$ such that
\begin{equation}\label{aexpansion1}
    (\gamma^{(1)}_1-\gamma^{(2)}_1)(x^*) = \|\gamma^{(1)}_1-\gamma^{(2)}_1\|_{L^{\infty}(\Sigma_1\cap B_{\frac{r_0}{4}}(P_1))}.
\end{equation}
By \eqref{aexpansion1},
\begin{equation*}
    \begin{split}
        I_1(w) &= \int_{B_{\rho}(P_1)\cap D_1}(\gamma_1^{(1)}-\gamma_1^{(2)})(x^*)\:A(x)\,\nabla_x G_1(x,w)\cdot\nabla_x G_2(x,w) \diff x +\\
        &+ \int_{B_{\rho}(P_1)\cap D_1} B_1 \cdot(x-x^*)\:A(x)\,\nabla_x G_1(x,w)\cdot\nabla_x G_2(x,w) \diff x +\\
         &+\int_{B_{\rho}(P_1)\cap D_1}(q_1^{(2)}-q_1^{(1)})(x)\, G_1(x,w)\cdot G_2(x,w) \diff x.
    \end{split}
\end{equation*}
By the asymptotic estimate \eqref{eqn: asymptotic1}, one obtains
\begin{align*}
    I_1(w) \geq &\|\gamma^{(1)}_1 - \gamma^{(2)}_1\|_{L^{\infty}(\Sigma_1\cap B_{\frac{r_0}{4}})} \Big\{ \int_{B_{\rho}(P_1)\cap D_1} A(x) \nabla_x H_1(x,w)\cdot \nabla_x H_2(x,w) \diff x -\\
    &-\int_{B_{\rho}(P_1)\cap D_1} |x-w|^{2(1-n)+\theta_1} \diff x - \int_{B_{\rho}(P_1)\cap D_1} |x-w|^{2(1-n +\theta_1)} \diff x\Big\} -\\
    &- C E \int_{B_{\rho}(P_1)\cap D_1} |x| |x-w|^{2(1-n)}\, \diff x - C E \int_{B_{\rho}(P_1)\cap D_1} |x-w|^{2(2-n)} \diff x.
\end{align*}
It turns out that
\begin{equation}\label{eqna: stabinterm1}
    |I_1(w)| \geq C \|\gamma^{(1)}_1-\gamma^{(2)}_1\|_{L^{\infty}(\Sigma_1\cap B_{\frac{r_0}{4}}(P_1))} r^{2-n} - C E r^{2-n+\theta_1} - C E r^{3-n}.
\end{equation}
If we rearrange the inequalities \eqref{eqna: stabinterm1} and \eqref{a1 stima I2} together with \eqref{eqna: bound S0}, we derive 
\begin{equation}\label{eqna: intermstab21}
    \begin{split}
    \|\gamma^{(1)}_1-\gamma^{(2)}_1&\|_{L^{\infty}(\Sigma_1\cap B_{\frac{r_0}{4}}(P_1))} r^{2-n} \le C E r^{3-n} + C E r^{2-n+\theta_1} +C \ep r^{2-n} + C E \rho^{2-n}.
    \end{split}
\end{equation}
Multiply \eqref{eqna: intermstab21} by $r^{n-2}$, then for $r\rightarrow 0^+$, 
\begin{equation}\label{eqna: intermstab3 1}
    \|\gamma^{(1)}_1-\gamma^{(2)}_1\|_{L^{\infty}(\Sigma_1\cap B_{\frac{r_0}{4}}(P_1))} \leq C \ep.
\end{equation}
A similar estimate can be derived for the derivative of $\gamma^{(1)}_1-\gamma^{(2)}_1$ along the normal direction $\nu$ at $P_1$ by means of an argument analogous to \cite[Theorem 2.3]{Alessandrini2017}. From Taylor's formula applied in a neighbourhood of the point $P_1$, one derives
\begin{align*}
(\gamma^{(1)}_1-\gamma^{(2)}_1)(x) &= (\gamma^{(1)}_1-\gamma^{(2)}_1)(P_1) + (D_T(\gamma^{(1)}_1-\gamma^{(2)}_1)(P_1))\cdot (x-P_1)' +\\
&+(\p_{\nu}(\gamma^{(1)}_1-\gamma^{(2)}_1)(P_1))\cdot (x-P_1)_n.
\end{align*}
Hence,
\begin{align*}
    &|\p_{y_n} \p_{z_n} S_0(w,w)| \geq\\  &\geq \Big|\int_{B_{\rho}(P_1)\cap D_1} \p_{\nu}(\gamma^{(1)}_1 - \gamma^{(2)}_1)(P_1)\cdot (x-P_1)_n A(x) \nabla_x \p_{y_n} G_1(x,w)\cdot \nabla_x \p_{z_n} G_2(x,w) \diff x \Big|-\\
    &- \Big|\int_{B_{\rho}(P_1)\cap D_1} D_T(\gamma^{(1)}_1 - \gamma^{(2)}_1)(P_1)\cdot (x-P_1)' A(x) \nabla_x \p_{y_n} G_1(x,w)\cdot \nabla_x \p_{z_n} G_2(x,w) \diff x \Big|-\\
    &- \Big|\int_{B_{\rho}(P_1)\cap D_1} (\gamma^{(1)}_1 - \gamma^{(2)}_1)(P_1) A(x) \nabla_x \p_{y_n} G_1(x,w)\cdot \nabla_x \p_{z_n} G_2(x,w) \diff x \Big|-\\
    &- \Big|\int_{B_{\rho}(P_1)\cap D_1} (q^{(2)}_1 - q^{(1)}_1)(x) \p_{y_n} G_1(x,w)\cdot \p_{z_n} G_2(x,w) \diff x \Big|\\
    &- \Big|\int_{\Omega\setminus (B_{\rho}(P_1)\cap D_1)} (\sigma^{(1)}-\sigma^{(2)})(x) \p_{y_n} \nabla_x G_1(x,w)\cdot \p_{z_n} \nabla_x G_2(x,w) \diff x \Big|-\\
    &- \Big|\int_{\Omega\setminus (B_{\rho}(P_1)\cap D_1)} (q^{(1)}-q^{(2)})(x) \p_{y_n} G_1(x,w)\cdot \p_{z_n} G_2(x,w) \diff x\Big|\\
    &= I_{11} - I_{12} - I_{13} - I_{14} - I_{15} - I_{16}.
\end{align*}
To estimate $I_{11}$ from below, we add and subtract the biphase fundamental solution and by \eqref{eqn: asymptotic2}, one derives
\begin{equation}
    I_{11}\geq C |\p_{\nu}(\gamma^{(1)}_1 - \gamma^{(2)}_1)(P_1)| r^{1-n} - C E r^{1-n+\theta_2}.
\end{equation}
To estimate the terms $I_{12}$ and $I_{13}$, notice that
\[
|(\gamma^{(1)}_1 - \gamma^{(2)}_1)(P_1)| + C |D_T(\gamma^{(1)}_1 - \gamma^{(2)}_1)(P_1)| \leq C \|\gamma^{(1)}_1 - \gamma^{(2)}_1\|_{L^{\infty}(\Sigma_1\cap B_{\frac{r_0}{4}})} \leq C \ep.
\]
Regarding the integral $I_{14}$, one bounds it from above as
\begin{align*}
    I_{14} &\leq \|q^{(2)}_1-q^{(1)}_1\|_{L^{\infty}(D_1)} \int_{D_1\cap B_{\rho}} |\p_{y_n} G_1(x,w)| |\p_{z_n} G_2(x,w)| \diff x\\
    &\leq C \int_{D_1\cap B_{\rho}} |x-w|^{2(1-n)} \leq C\,r^{2-n}.
\end{align*}
The integral $I_{15}$ and $I_{16}$ can be bounded by means of \cite[Proposition 3.1]{Alessandrini2005} as
\[
I_{15}, I_{16} \leq C E \rho^{-n}.
\]
To sum up, we have 
\begin{equation}
|\p_{\nu}(\gamma^{(1)}_1 - \gamma^{(2)}_1)(P_1)| r^{1-n} \leq |\p_{y_n} \p_{z_n} S_0(w,w)| + C \{ Er^{1-n+\theta_2} + \ep r^{-n} \}.
\end{equation}
Since 
\[
|\p_{y_n}\p_{z_n} S_0(w,w)| \leq C \ep r^{-n},
\]
one derives
\begin{equation}\label{eqn1a: interm}
|\p_{\nu}(\gamma^{(1)}_1 - \gamma^{(2)}_1)(P_1)| r^{1-n} \leq C \{ Er^{1-n+\theta_2} +  \ep r^{-n} \}.
\end{equation}
Multiply \eqref{eqn1a: interm} by $r^{n-1}$ to obtain
\[
|\p_{\nu}(\gamma^{(1)}_1 - \gamma^{(2)}_1)(P_1)| \leq C\{E r^{\theta_2} + \ep r^{-1}\}.
\]
By optimizing w.r.t. $r$, it turns out that
\begin{equation}
    |\p_{\nu}(\gamma^{(1)}_1-\gamma^{(2)}_1)(P_1)| \leq C(\varepsilon + E) \left(\frac{\varepsilon}{\varepsilon+E}\right)^{\frac{\theta_2}{\theta_2+1}}.
\end{equation}
and we set $\displaystyle\eta_1 = \frac{\theta_2}{\theta_2+1}.$ Hence, we conclude that
\begin{equation}\label{eqna: bound gamma boundary}
    \|\gamma^{(1)}_1-\gamma^{(2)}_1\|_{L^{\infty}(D_1)} \leq C(\varepsilon + E) \left(\frac{\varepsilon}{\varepsilon+E}\right)^{\eta_1}.
\end{equation}
\bigskip

\paragraph{Stability at the boundary for $q$}
Our goal is to derive a bound for $\|q^{(1)}_1-q^{(2)}_1\|_{L^{\infty}(D_1)}$ in terms of \eqref{eqna: bound gamma boundary}. Notice that the norm ${\|q^{(2)}_1-q^{(1)}\|_{L^{\infty}(D_1)}}$ can be evaluated in terms of the following quantities:
\begin{equation}\label{eqna: upper q1 stab}
\|q^{(2)}_1-q^{(1)}_1\|_{L^{\infty}(\Sigma_1\cap B_{\frac{r_0}{4}}(P_1))} \quad \text{and}\quad |\p_{\nu}(q^{(2)}_1 - q^{(1)}_1)(P_1)|.
\end{equation}
Let $\rho=\frac{r_0}{4}$, $r\in (0,\frac{\bar{r}}{8})$ and set $w = P_1 + r\,\nu(P_1)$. Consider
\[
\p_{y_n}\p_{z_n} S_0(w,w) = \p_{y_n}\p_{z_n} I_1(w) + \p_{y_n}\p_{z_n} I_2(w),
\]
with $w=P_1+r\nu(P_1)$, as above. The term $\p_{y_n}\p_{z_n} I_2(w)$ can be bounded from above as
\[
\p_{y_n}\p_{z_n} I_2(w) \leq C\,E \rho^{-n}.
\]
To determine a lower bound for $\p_{y_n}\p_{z_n} I_1(w)$, first notice that there exists a point $\bar{x}\in \overline{\Sigma_1\cap B_{\rho}(P_1)}$ such that
\[
(q^{(2)}_1-q^{(1)}_1)(\bar{x}) = \|q^{(2)}_1-q^{(1)}_1\|_{L^{\infty}(\Sigma_1\cap B_{\frac{r_0}{4}}(P_1))}.
\]
By \eqref{eqn: asymptotic1} and \eqref{eqna: bound gamma boundary} one derives
\begin{align*}
C \|q^{(2)}_1-q^{(1)}_1\|_{L^{\infty}(\Sigma_1\cap B_{\frac{r_0}{4}}(P_1))} r^{2-n} \leq |\p_{y_n}\p_{z_n} I_1(w)| + C E r^{2-n+\theta_1} + C(\varepsilon + E) \left(\frac{\varepsilon}{\varepsilon+E}\right)^{\eta_1} r^{-n}.
\end{align*}
By \eqref{eqna: 1der},
\[
|\p_{y_n}\p_{z_n} S_0(w,w)| \leq C\, \ep\, r^{-n},
\]
hence, if we collect the upper bound for $I_2(w)$ and the lower bound for $I_1(w)$, we derive
\begin{align*}
\|q^{(2)}_1-q^{(1)}_1\|_{L^{\infty}(\Sigma_1\cap B_{\frac{r_0}{4}}(P_1))} r^{2-n} &\leq C\Big\{\ep\, r^{-n}  + E r^{2-n+\theta_1} + (\varepsilon + E) \left(\frac{\varepsilon}{\varepsilon+E}\right)^{\eta_1} r^{-n} + E\Big\}.
\end{align*}
Multiply by $r^{n-2}$ to obtain 
\begin{align*}
    \|q^{(2)}_1-q^{(1)}_1\|_{L^{\infty}(\Sigma_1\cap B_{\frac{r_0}{4}}(P_1))} &\leq C (\varepsilon + E) \Big\{ \left(\frac{\varepsilon}{\varepsilon+E}\right)^{\eta_1} r^{-2} +  E r^{\theta_1}\Big\}.
\end{align*}
By optimizing with respect to $r$, one concludes that
\begin{equation}\label{eqn3a: bound q1}
    \|q^{(2)}_1-q^{(1)}_1\|_{L^{\infty}(\Sigma_1\cap B_{\frac{r_0}{4}}(P_1))} \leq C (E + \ep) \left( \frac{\ep}{\ep+E}\right)^{\frac{\eta_1 \theta_1}{\theta_1+2}}.
\end{equation}

To estimate $|\p_{\nu}(q^{(2)}_1 - q^{(1)}_1)(P_1)|$, consider the singular solution $\p^2_{y_i y_j}\p^2_{z_i z_j} S_0(w,w)$ and split it as the sum of the terms
\begin{align*}
I_1^{ij}(w) &= \int_{D_1\cap B_{\rho}(P_1)} (\sigma^{(1)}_1 - \sigma^{(2)}_1)(x) \nabla_x \p^2_{y_i y_j} G_1(x,w)\cdot \p^2_{z_i z_j} G_2(x,w) \diff x +\\
&+\int_{D_1\cap B_{\rho}(P_1)} (q^{(2)}_1 - q^{(1)}_1)(x) \p^2_{y_i y_j} G_1(x,w)\cdot \p^2_{z_i z_j} G_2(x,w) \diff x,
\end{align*}
and
\begin{align*}
I_2^{ij}(w) &= \int_{\Omega\setminus (D_1\cap B_{\rho}(P_1))} (\sigma^{(1)}-\sigma^{(2)})(x) \nabla_x \p^2_{y_i y_j} G_1(x,w)\cdot \nabla_x \p^2_{z_i z_j} G_2(x,w) \diff x +\\
&+ \int_{\Omega\setminus (D_1\cap B_{\rho}(P_1))} (q^{(2)} - q^{(1)})(x) \p^2_{y_i y_j} G_1(x,w)\cdot \p^2_{z_i z_j} G_2(x,w) \diff x.
\end{align*}
Set $I_m(w)=\{I^{ij}_m(w)\}_{i,j=1,\dots,n}$. Denote by $|I_m(w)|$ the Euclidean norm of the matrix $I_m(w)$. The upper bound for $|I_2(w)|$ is given by
\[
|I_2(w)| \leq C E \rho^{-(n+2)},
\]
where $C$ is a positive constant that depends on the a priori data only. For the lower bound for $I_1(w)$,  
\begin{align*}
    |I_1(w)| &\geq \frac{1}{n} \sum_{i,j=1}^n\Big\{\Big|\int_{D_1\cap B_{\rho}(P_1)} (\p_{\nu}(q^{(2)}_1-q^{(1)}_1)(P_1))\cdot (x-P_1)_n \p^2_{y_i y_j} G_1(x,w)\cdot \p^2_{z_i z_j} G_2(x,w) \diff x\Big| - \\
    &- \Big|\int_{D_1\cap B_{\rho}(P_1)} (D_T(q^{(2)}_1-q^{(1)}_1)(P_1))\cdot (x-P_1)' \p^2_{y_i y_j} G_1(x,w)\cdot \p^2_{z_i z_j} G_2(x,w) \diff x\Big|-\\
    &- \Big|\int_{D_1\cap B_{\rho}(P_1)} (q^{(2)}_1-q^{(1)}_1)(P_1) \p^2_{y_i y_j} G_1(x,w)\cdot \p^2_{z_i z_j} G_2(x,w) \diff x\Big|\Big\}-\\
    &- \Big|\int_{D_1\cap B_{\rho}(P_1)} (\sigma^{(2)}_1-\sigma^{(1)}_1)(x) \p^2_{y_i y_j}\nabla_x G_1(x,w)\cdot \p^2_{z_i z_j}\nabla_x G_2(x,w) \diff x\Big|.
\end{align*}
Since 
\[
|(q^{(2)}_1-q^{(1)}_1)(P_1)|+C |(D_T(q^{(2)}_1-q^{(1)}_1)(P_1))| \leq C\|q^{(2)}_1-q^{(1)}_1\|_{L^{\infty}(\Sigma_1\cap B_{\frac{r_0}{4}}(P_1))},
\]
by \eqref{eqn3a: bound q1} and \eqref{eqn: asymptotic3}, one derives
\begin{equation}\label{eqna: interm}
\begin{split}
|I_1(w)|\geq C |(\p_{\nu}(q^{(2)}_1-q^{(1)}_1)(P_1))| r^{1-n} - C (E + \ep) \left( \frac{\ep}{\ep + E}\right)^{\frac{\eta_1 \theta_1}{\theta_1+2}} r^{-n}-\\
- C E r^{1+\theta_2 -n} - C(\varepsilon + E) \left(\frac{\varepsilon}{\varepsilon+E}\right)^{\eta_1}r^{-2-n}.
\end{split}
\end{equation}
Since for $y,z\in (D_0)_{r_0\slash 3}$,
\begin{equation*}
    \begin{split}
        \int_{\Sigma} [&\sigma^{(2)}(x)\nabla_x \p^2_{z_n} G_2(x,z)\cdot \nu\, \p^2_{y_n} G_1(x,y) - \sigma^{(1)}(x) \nabla_x \p^2_{y_n} G_1(x,y)\cdot\nu\, \p^2_{z_n} G_2(x,z) ] \diff S(x) =\\
        &=\int_{\Omega} [(\sigma^{(1)}-\sigma^{(2)})(x) \nabla_x \p^2_{y_n} G_1(x,y)\cdot \nabla_x \p^2_{z_n} G_2(x,z) + (q^{(2)}-q^{(1)})(x) \p^2_{y_n} G_1(x,y) \p^2_{z_n} G_2(x,z)] \diff x,
    \end{split}
\end{equation*}
it turns out that
\begin{equation}\label{eqn3a: interm 2derS0}
|\p^2_{y_n}\p^2_{z_n} S_0(w,w)| \leq C\, \ep\, r^{-2-n}.
\end{equation}
By \eqref{eqna: interm} and \eqref{eqn3a: interm 2derS0}, one derives
\begin{equation*}
\begin{split}
    |(\p_{\nu}(q^{(2)}_1-q^{(1)}_1)(P_1))| r^{1-n} \leq C (E + \ep) \left( \frac{\ep}{\ep+E}\right)^{\frac{\eta_1 \theta_1}{\theta_1+2}} r^{-n} +\\
    + C(\varepsilon + E) \left(\frac{\varepsilon}{\varepsilon+E}\right)^{\eta_1}r^{-2-n}+C E r^{1+\theta_2 -n} C \ep r^{-1-n}.
    \end{split}
\end{equation*}
Multiply by $r^{n-1}$ the last equation and optimize with respect to $r$ leads to the estimate
\begin{equation*}
    |(\p_{\nu}(q^{(2)}_1-q^{(1)}_1)(P_1))| \leq C (E + \ep) \left( \frac{\ep}{\ep + E}\right)^{\eta_2},
\end{equation*}
with $\eta_2\in (0,1)$.

\end{proof}

\bibliographystyle{siam}
\bibliography{bibliography}

\begin{thebibliography}{10}

\bibitem{Agmon1959}
{\sc S.~Agmon, A.~Douglis, and L.~Nirenberg}, {\em Estimates near the boundary
  for solutions of elliptic partial differential equations satisfying general
  boundary conditions. {I}}, Comm. Pure Appl. Math., 12 (1959), pp.~623--727.

\bibitem{Alberti2023}
{\sc G.~S. Alberti, A.~Arroyo, and M.~Santacesaria}, {\em Inverse problems on
  low-dimensional manifolds}, Nonlinearity, 36 (2023), pp.~734--808.

\bibitem{Alessandrini1988}
{\sc G.~Alessandrini}, {\em Stable determination of conductivity by boundary
  measurements}, Appl. Anal., 27 (1988), pp.~153--172.

\bibitem{Alessandrini1990}
\leavevmode\vrule height 2pt depth -1.6pt width 23pt, {\em Singular solutions
  of elliptic equations and the determination of conductivity by boundary
  measurements}, J. Differential Equations, 84 (1990), pp.~252--272.

\bibitem{Alessandrini2019}
{\sc G.~Alessandrini, M.~V. de~Hoop, F.~Faucher, R.~Gaburro, and E.~Sincich},
  {\em Inverse problem for the {H}elmholtz equation with {C}auchy data:
  reconstruction with conditional well-posedness driven iterative
  regularization}, ESAIM Math. Model. Numer. Anal., 53 (2019), pp.~1005--1030.

\bibitem{Alessandrini2017}
{\sc G.~Alessandrini, M.~V. de~Hoop, R.~Gaburro, and E.~Sincich}, {\em
  Lipschitz stability for the electrostatic inverse boundary value problem with
  piecewise linear conductivities}, J. Math. Pures Appl. (9), 107 (2017),
  pp.~638--664.

\bibitem{Alessandrini2018}
\leavevmode\vrule height 2pt depth -1.6pt width 23pt, {\em Lipschitz stability
  for a piecewise linear {S}chr\"{o}dinger potential from local {C}auchy data},
  Asymptot. Anal., 108 (2018), pp.~115--149.

\bibitem{Alessandrini2005}
{\sc G.~Alessandrini and S.~Vessella}, {\em Lipschitz stability for the inverse
  conductivity problem}, Adv. in Appl. Math., 35 (2005), pp.~207--241.

\bibitem{Applegate2020}
{\sc M.~B. Applegate, R.~E. Istfan, S.~Spink, A.~Tank, and D.~Roblyer}, {\em
  Recent advances in high speed diffuse optical imaging in biomedicine}, APL
  Photonics, 5 (2020), p.~040802.

\bibitem{Arridge1999}
{\sc S.~R. Arridge}, {\em Optical tomography in medical imaging}, Inverse
  Problems, 15 (1999), pp.~R41--R93.

\bibitem{Arridge1998}
{\sc S.~R. Arridge and W.~R.~B. Lionheart}, {\em Nonuniqueness in
  diffusion-based optical tomography}, Opt. Lett., 23 (1998), pp.~882--884.

\bibitem{Arridge2009}
{\sc S.~R. Arridge and J.~C. Schotland}, {\em Optical tomography: forward and
  inverse problems}, Inverse Problems, 25 (2009), pp.~123010, 59.

\bibitem{aspri2022}
{\sc A.~Aspri, E.~Beretta, E.~Francini, and S.~Vessella}, {\em Lipschitz stable
  determination of polyhedral conductivity inclusions from local boundary
  measurements}, SIAM J. Math. Anal., 54 (2022), pp.~5182--5222.

\bibitem{Bellassoued2007}
{\sc M.~Bellassoued and M.~Yamamoto}, {\em Lipschitz stability in determining
  density and two {L}am\'{e} coefficients}, J. Math. Anal. Appl., 329 (2007),
  pp.~1240--1259.

\bibitem{Beretta2016}
{\sc E.~Beretta, M.~V. de~Hoop, F.~Faucher, and O.~Scherzer}, {\em Inverse
  boundary value problem for the {H}elmholtz equation: quantitative conditional
  {L}ipschitz stability estimates}, SIAM J. Math. Anal., 48 (2016),
  pp.~3962--3983.

\bibitem{Beretta2013}
{\sc E.~Beretta, M.~V. de~Hoop, and L.~Qiu}, {\em Lipschitz stability of an
  inverse boundary value problem for a {S}chr\"{o}dinger-type equation}, SIAM
  J. Math. Anal., 45 (2013), pp.~679--699.

\bibitem{Beretta2011}
{\sc E.~Beretta and E.~Francini}, {\em Lipschitz stability for the electrical
  impedance tomography problem: the complex case}, Comm. Partial Differential
  Equations, 36 (2011), pp.~1723--1749.

\bibitem{Beretta2022}
\leavevmode\vrule height 2pt depth -1.6pt width 23pt, {\em Global {L}ipschitz
  stability estimates for polygonal conductivity inclusions from boundary
  measurements}, Appl. Anal., 101 (2022), pp.~3536--3549.

\bibitem{Beretta2021}
{\sc E.~Beretta, E.~Francini, and S.~Vessella}, {\em Lipschitz stable
  determination of polygonal conductivity inclusions in a two-dimensional
  layered medium from the {D}irichlet-to-{N}eumann map}, SIAM J. Math. Anal.,
  53 (2021), pp.~4303--4327.

\bibitem{Brummelhuis1995}
{\sc R.~Brummelhuis}, {\em Three-spheres theorem for second order elliptic
  equations}, J. Anal. Math., 65 (1995), pp.~179--206.

\bibitem{Calderon1980}
{\sc A.-P. Calder\'{o}n}, {\em On an inverse boundary value problem}, in
  Seminar on {N}umerical {A}nalysis and its {A}pplications to {C}ontinuum
  {P}hysics ({R}io de {J}aneiro, 1980), Soc. Brasil. Mat., Rio de Janeiro,
  1980, pp.~65--73.

\bibitem{Carstea2020}
{\sc C.~I. C\^{a}rstea and J.-N. Wang}, {\em Propagation of smallness for an
  elliptic {PDE} with piecewise {L}ipschitz coefficients}, J. Differential
  Equations, 268 (2020), pp.~7609--7628.

\bibitem{Dicristo2003}
{\sc M.~Di~Cristo and L.~Rondi}, {\em Examples of exponential instability for
  inverse inclusion and scattering problems}, Inverse Problems, 19 (2003),
  pp.~685--701.

\bibitem{Eberle2021}
{\sc S.~Eberle, B.~Harrach, H.~Meftahi, and T.~Rezgui}, {\em Lipschitz
  stability estimate and reconstruction of {L}am\'{e} parameters in linear
  elasticity}, Inverse Probl. Sci. Eng., 29 (2021), pp.~396--417.

\bibitem{Foschiatti2021}
{\sc S.~Foschiatti, R.~Gaburro, and E.~Sincich}, {\em Stability for the
  {C}alder\'{o}n's problem for a class of anisotropic conductivities via an ad
  hoc misfit functional}, Inverse Problems, 37 (2021), pp.~Paper No. 125007,
  34.

\bibitem{Foschiatti2023}
{\sc S.~Foschiatti and E.~Sincich}, {\em Stable determination of an anisotropic
  inclusion in the {S}chr\"{o}dinger equation from local {C}auchy data},
  Inverse Probl. Imaging, 17 (2023), pp.~584--613.

\bibitem{Francini2023}
{\sc E.~Francini, S.~Vessella, and J.-N. Wang}, {\em Propagation of smallness
  and size estimate in the second order elliptic equation with discontinuous
  complex {L}ipschitz conductivity}, J. Differential Equations, 343 (2023),
  pp.~687--717.

\bibitem{Gaburro2015}
{\sc R.~Gaburro and E.~Sincich}, {\em Lipschitz stability for the inverse
  conductivity problem for a conformal class of anisotropic conductivities},
  Inverse Problems, 31 (2015), pp.~015008, 26.

\bibitem{Gebauer2008}
{\sc B.~Gebauer}, {\em Localized potentials in electrical impedance
  tomography}, Inverse Probl. Imaging, 2 (2008), pp.~251--269.

\bibitem{Gilbarg2001}
{\sc D.~Gilbarg and N.~S. Trudinger}, {\em Elliptic partial differential
  equations of second order}, Classics in Mathematics, Springer-Verlag, Berlin,
  2001.
\newblock Reprint of the 1998 edition.

\bibitem{Harrach2009}
{\sc B.~Harrach}, {\em On uniqueness in diffuse optical tomography}, Inverse
  Problems, 25 (2009), pp.~055010, 14.

\bibitem{Harrach2012}
\leavevmode\vrule height 2pt depth -1.6pt width 23pt, {\em Simultaneous
  determination of the diffusion and absorption coefficient from boundary
  data}, Inverse Probl. Imaging, 6 (2012), pp.~663--679.

\bibitem{harrach2023}
{\sc B.~Harrach and Y.-H. Lin}, {\em Simultaneous recovery of piecewise
  analytic coefficients in a semilinear elliptic equation}, Nonlinear Anal.,
  228 (2023), pp.~Paper No. 113188, 14.

\bibitem{Isakov1988}
{\sc V.~Isakov}, {\em On uniqueness of recovery of a discontinuous conductivity
  coefficient}, Comm. Pure Appl. Math., 41 (1988), pp.~865--877.

\bibitem{Isakov2017}
\leavevmode\vrule height 2pt depth -1.6pt width 23pt, {\em Inverse problems for
  partial differential equations}, vol.~127 of Applied Mathematical Sciences,
  Springer, Cham, third~ed., 2017.

\bibitem{Knyazev2010}
{\sc A.~Knyazev, A.~Jujunashvili, and M.~Argentati}, {\em Angles between
  infinite dimensional subspaces with applications to the {R}ayleigh-{R}itz and
  alternating projectors methods}, J. Funct. Anal., 259 (2010), pp.~1323--1345.

\bibitem{Mandache2001}
{\sc N.~Mandache}, {\em Exponential instability in an inverse problem for the
  {S}chr\"{o}dinger equation}, Inverse Problems, 17 (2001), pp.~1435--1444.

\bibitem{Rondi2003}
{\sc L.~Rondi}, {\em A remark on a paper by {G}. {A}lessandrini and {S}.
  {V}essella: ``{L}ipschitz stability for the inverse conductivity problem''
  [{A}dv. in {A}ppl. {M}ath. {\bf 35} (2005), no. 2, 207--241; mr2152888]},
  Adv. in Appl. Math., 36 (2006), pp.~67--69.

\bibitem{Ruland2019}
{\sc A.~R\"{u}land and E.~Sincich}, {\em Lipschitz stability for the finite
  dimensional fractional {C}alder\'{o}n problem with finite {C}auchy data},
  Inverse Probl. Imaging, 13 (2019), pp.~1023--1044.

\bibitem{Ruland2022}
\leavevmode\vrule height 2pt depth -1.6pt width 23pt, {\em On {R}unge
  approximation and {L}ipschitz stability for a finite-dimensional
  {S}chr\"{o}dinger inverse problem}, Appl. Anal., 101 (2022), pp.~3655--3666.

\bibitem{Sincich2007}
{\sc E.~Sincich}, {\em Lipschitz stability for the inverse {R}obin problem},
  Inverse Problems, 23 (2007), pp.~1311--1326.

\bibitem{stein1970}
{\sc E.~M. Stein}, {\em Singular integrals and differentiability properties of
  functions}, Princeton Mathematical Series, No. 30, Princeton University
  Press, Princeton, N.J., 1970.

\bibitem{Uhlmann1987}
{\sc J.~Sylvester and G.~Uhlmann}, {\em A global uniqueness theorem for an
  inverse boundary value problem}, Ann. of Math. (2), 125 (1987), pp.~153--169.

\bibitem{Uhlmann2014}
{\sc G.~Uhlmann}, {\em Inverse problems: seeing the unseen}, Bull. Math. Sci.,
  4 (2014), pp.~209--279.

\bibitem{vessella2023}
{\sc S.~Vessella}, {\em Notes on unique continuation properties for partial
  differential equations -- introduction to the stability estimates for inverse
  problems}, 2023.

\end{thebibliography}

\end{document}